\definecolor{halfgray}{gray}{0.55} 
\definecolor{webgreen}{rgb}{0,0.5,0}
\definecolor{webbrown}{rgb}{.6,0,0} \hypersetup{%
\newtheorem{theorem}{Theorem}
\newtheorem{proposition}{Proposition}
\newtheorem{corollary}{Corollary}
\newtheorem{lemma}{Lemma}
\newtheorem{remark}{Remark}
\newtheorem{example}{Example}
\renewcommand{\epsilon}{\varepsilon}
\renewcommand{\phi}{\varphi}
\def\Z{\mathbb{Z}}
\def\R{\mathbb{R}}
\def\cA{\EuScript{A}}
\def\Id{\text{\rm Id}}
\def\A{\mathbf{A}}
\begin{document}

\title{Shadowing for nonautonomous dynamics}

\begin{abstract}
We prove that whenever a sequence of invertible and bounded operators $(A_m)_{m\in \Z}$ acting on a Banach space $X$ admits an exponential dichotomy and a sequence of differentiable maps $f_m \colon X\to X$, $m\in \Z$, has bounded and H\"{o}lder derivatives, the nonautonomous dynamics given by $x_{m+1}=A_mx_m+f_m(x_m)$, $m\in \Z$ has various shadowing properties. 
Hence, we extend recent results of  Bernardes Jr. et al. in several directions. 
As a nontrivial  application of our results, we give a new proof of the nonautonomous Grobman-Hartman theorem.
\end{abstract}

\author{Lucas Backes}
\address{\noindent Departamento de Matem\'atica, Universidade Federal do Rio Grande do Sul, Av. Bento Gon\c{c}alves 9500, CEP 91509-900, Porto Alegre, RS, Brazil.}
\email{lhbackes@impa.br} 

\author{Davor Dragi\v cevi\'c}
\address{Department of Mathematics, University of Rijeka, Croatia}
\email{ddragicevic@math.uniri.hr}

\keywords{Shadowing, Nonautonomus systems, Exponential dichotomies, Nonlinear perturbations}
\subjclass[2010]{Primary: 37C50, 34D09; Secondary: 34D10.}
\maketitle

\section{Introduction}
Given a dynamical system $f:M\to M$ acting on an arbitrary  metric space $(M,d)$, a \emph{$\delta$-pseudotrajectory} for $f$ is any sequence of points $(y_n)_{n\in \Z}\subset M$ satisfying
\begin{displaymath}
d(y_{n+1},f(y_n))\le \delta \text{ for every } n\in\Z. 
\end{displaymath}
An important question that naturally arises  is to understand whether a pseudotrajectory can be approximated by a real trajectory for $f$. More precisely,   we ask if there exists a point $x\in M$ such that $d(y_n,f^n(x))$ is small for every $n\in \Z$. If this happens,  we say that $x$ \emph{shadows} the pseudotrajectory $(y_n)_{n\in \Z}$. Furthermore, if  every pseudotrajectory has a shadowing point we say that the system $(f,M)$ has the \emph{shadowing property} (see~\cite{Pal00, Pil99} for more precise definitions).

We emphasize that there is a clear motivation for studying shadowing properties. For instance, a pseudotrajectory can be viewed as a result of measurements in a real system (that are usually subjected to round-off error or noise). Then, to study the  shadowing problem is to try to understand the relationship between the real trajectories and the approximate trajectories obtained by such measurements. If the system has the shadowing property we  have  that the numerically obtained trajectories actually reflect the real behavior of trajectories of $f$. From a more theoretical perspective, the shadowing property has proved to be very fruitful when dealing with problems concerned with topological stability and construction of symbolic dynamics \cite{Bow75}.

It is well known that many classes of dynamical systems exhibit  shadowing property.  Indeed, Bowen~\cite{Bow75} and Anosov~\cite{Ano70} proved that \emph{uniformly hyperbolic} dynamical systems  with both discrete and continuous time have  shadowing property. Furthermore, Katok~\cite{Katok} proved that \emph{nonuniformly hyperbolic} dynamical systems also
posses certain type of shadowing property.  More recently, Pilyugin~\cite{Pil97, Pil99} proved that structurally stable systems exhibit shadowing property.  Finally, we recall that it has been noticed that partially hyperbolic dynamical systems have no shadowing property~\cite{BDT}. 

The original proofs that uniformly hyperbolic dynamical systems have shadowing property (due to Bowen and Anosov)  rely on the existence of invariant stable and unstable manifolds. Later, Palmer~\cite{Palmer2} and independently Mayer and Sell~\cite{MS} gave quite  ingenious and very  simple analytic proofs of shadowing lemma that don't use the invariant manifold theory. 
Their approach also inspired versions of the shadowing lemma that deal with maps on Banach spaces (see~\cite{CLP, Henry}). In addition, analytic proof of the shadowing lemma for nonuniformly hyperbolic dynamics was given in~\cite{DS}. 
 Finally, in the recent paper~\cite{BCDMP} the authors have developed shadowing theory for linear operators on an arbitrary Banach space. 
 We refer to~\cite{Pal00, Pil99} for more discussion and further references related to shadowing theory. 

We emphasize that all the above mentioned results deal with \emph{autonomous} dynamics and the main objective of the present paper is to develop shadowing theory for \emph{nonautonomous} systems acting on an arbitrary Banach space. More precisely, starting with a linear dynamics
\begin{equation}\label{1021}
x_{m+1}=A_m x_m \quad m\in \Z, 
\end{equation}
where the sequence $(A_m)_{m\in \Z}$ admits an exponential dichotomy (which represents a version of the notion of hyperbolicity for time-varying dynamics), we prove that  a small \emph{nonlinear} perturbation of~\eqref{1021} has the shadowing property.  In particular, results from~\cite{BCDMP} correspond to a special case when $(A_m)_{m\in \Z}$  in~\eqref{1021} is a constant sequence of operators and when linear dynamics is not perturbed.
Moreover, we propose a unified approach (inspired by~\cite{DD}) that allows us to measure
the error in the notion of a pseudotrajectory as well as its deviation from a trajectory in  a variety of ways that include the concept of  standard shadowing (as well as other concepts of shadowing studied in~\cite{BCDMP} and elsewhere) as a very particular case. We note that our methods are inspired by the above mentioned analytic proofs of the shadowing lemma.

Finally, as a nontrivial application of our results, we give a simple proof of the nonautonomous version of the Grobman-Hartman theorem in the discrete-time setting. We emphasize that the first version of this result goes back to Palmer~\cite{Palmer} who considered finite-dimensional dynamics with continuous time. More recent versions are due to Barreira and Valls~\cite{BarVal-DCDS07,
BV1} (see also~\cite{BDV1}) who deal with discrete-time dynamics on an arbitrary Banach space that admits a nonuniform exponential dichotomy.  Although our version of the nonautonomous Grobman-Hartman theorem works under more restrictive assumptions that those we just mentioned (see Remark~\ref{10:53} for a detailed discussion), our proof differs from theirs and 
is inspired by the classical construction of the conjugacy between Anosov diffeomorphism and its small perturbation (see~\cite{KH}). 

The paper is organized as follows. In Section~\ref{Prel} we introduce the class of sequence spaces that will be used to present a general framework that will unify various types of shadowing. We also recall the classical notion of an exponential dichotomy. Section~\ref{MR} contains main results of our paper,  while in Section~\ref{A} we present some applications.

\section{Preliminaries}\label{Prel}
\subsection{Banach sequence spaces}
In this subsection we present some basic
definitions and properties from the theory of Banach sequence spaces. The material is taken from~\cite{DD, Sasu} where the reader can also find more details. 

Let $\mathcal{S}(\Z)$ be the set of all sequences $\mathbf{s}=(s_n)_{n\in \Z}$ of real numbers. We say that a linear subspace $B\subset \mathcal{S}(\Z)$ is a \emph{normed sequence space} (over $\Z$) if there exists a norm $\lVert \cdot \rVert_B \colon B \to \R_0^+$ such that if $\mathbf{s}'=(s_n')_{n\in \Z}\in B$ and $\lvert s_n\rvert \le \lvert s_n'\rvert$ for $n\in \Z$, then $\mathbf{s}=(s_n)_{n\in \Z}\in B$ and $\lVert \mathbf{s}\rVert_B \le \lVert \mathbf{s}'\rVert_B$. If in addition $(B, \lVert \cdot \rVert_B)$ is complete, we say that $B$ is a \emph{Banach sequence space}.

Let $B$ be a Banach sequence space over $\Z$. We say that $B$ is \emph{admissible} if:
\begin{enumerate}
\item
$\chi_{\{n\}} \in B$ and $\lVert \chi_{\{n\}}\rVert_B >0$ for $n\in \Z$, where $\chi_A$ denotes the characteristic function of the set $A\subset \Z$;
\item
for each $\mathbf{s}=(s_n)_{n\in \Z}\in B$ and $m\in \Z$, the sequence $\mathbf{s}^m=(s_n^m)_{n\in \Z}$ defined by $s_n^m=s_{n+m}$ belongs to $B$ and  $\lVert \mathbf{s}^m \rVert_B = \lVert \mathbf{s}\rVert_B$.
\end{enumerate}
Note that it follows from the definition that for each admissible Banach space $B$ over $\Z$, we have that $\lVert \chi_{\{n\}}\rVert_B=\lVert \chi_{\{0\}}\rVert_B$ for each $n\in \Z$. Throughout this paper we will assume for the sake of simplicity  that $\lVert \chi_{\{0\}}\rVert_B=1$.

We recall some  explicit examples of admissible  Banach sequence spaces over $\Z$ (see~\cite{DD,Sasu}).

\begin{example}\label{ex1}
The set $l^\infty =\{ \mathbf{s}=(s_n)_{n\in \Z} \in \mathcal{S} (\Z): \sup_{n\in \Z} \lvert s_n \rvert < \infty \}$ is a Banach sequence space when equipped with the norm $\lVert \mathbf{s} \rVert =\sup_{n\in \Z} \lvert s_n \rvert$.
\end{example}

\begin{example}\label{ex2}
The set $c_0=\{ \mathbf{s}=(s_n)_{n\in \Z} \in \mathcal{S} (\Z): \lim_{\lvert n\rvert \to \infty} \lvert s_n\rvert=0\}$ is a Banach sequence space when equipped with the norm $\lVert \cdot \rVert$ from the previous example. 
\end{example}

\begin{example}\label{ex3}
For each $p\in [1, \infty )$, the set \[ l^p= \bigg{\{} \mathbf{s}=(s_n)_{n\in \Z} \in \mathcal{S}(\Z): \sum_{n\in \Z} \lvert s_n \rvert^p <\infty \bigg{\}} \] is a Banach sequence space when equipped with the norm \[\lVert \mathbf{s} \rVert= \bigg{(}\sum_{n\in \Z} \lvert s_n \rvert^p \bigg{)}^{1/p}.\]
\end{example}

\begin{example}[Orlicz sequence spaces]
Let $\phi \colon (0, +\infty) \to (0, +\infty]$ be a nondecreasing nonconstant left-continuous function. We set $\psi(t)=\int_0^t \phi(s) \, ds$  for $t\ge 0$. Moreover, for each $\mathbf s=(s_n)_{n\in \Z} \in \mathcal S(\Z)$, let $M_\phi (\mathbf s)=\sum_{n\in \Z} \psi(\lvert s_n \rvert)$. Then
\[
B=\bigl\{ \mathbf s \in \mathcal S(\Z) : M_\phi (c \mathbf s)<+\infty \ \text{for some} \ c>0 \bigr\}
\]
is a Banach sequence space when equipped with the norm
\[
\lVert \mathbf s \rVert=\inf \bigl\{ c>0 : M_\phi ( \mathbf s / c ) \le 1 \bigr\}.
\]
\end{example}
\subsection{Important construction}
Let us now introduce sequence spaces that will play important role in our arguments. 
Let $X$ be an arbitrary Banach space and $B$ any Banach sequence space over $\Z$ with norm $\lVert \cdot \rVert_B$. Set
\[
X_B:=\bigg{\{} \mathbf x=(x_n)_{n\in \Z} \subset X: (\lVert x_n\rVert)_{n\in \Z}\in B \bigg{\}}.
\]
Finally, for $\mathbf x=(x_n)_{n\in \Z} \in X_B$ we define 
\begin{equation}\label{nn}
\lVert \mathbf x\rVert_B:=\lVert (\lVert x_n\rVert)_{n\in \Z}\rVert_B.
\end{equation}
\begin{remark}
We emphasize that in~\eqref{nn} we slightly abuse the notation since norms on  $B$ and $X_B$ are denoted in the same way. However, this will cause no confusion since in the rest of the paper we will deal with spaces $X_B$.
\end{remark}
\begin{example}
Let $B=l^\infty$ (see Example~\ref{ex1}). Then, 
\[
X_B=\bigg{\{} \mathbf x=(x_n)_{n\in \Z} \subset X: \sup_{n\in \Z} \lVert x_n\rVert<\infty \bigg{\}}.
\]
\end{example}
The proof of the following result is straightforward (see~\cite{DD, Sasu}).
\begin{proposition}
$(X_B, \lVert \cdot \rVert_B)$ is a Banach space. 
\end{proposition}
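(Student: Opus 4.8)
The statement bundles three claims: that $X_B$ is a linear subspace of the space of all $X$-valued sequences, that~\eqref{nn} defines a norm on it, and that this norm is complete. My plan is to handle the first two briskly and to concentrate on completeness. For the linear and normed part, given $\mathbf x=(x_n)_{n\in\Z},\mathbf y=(y_n)_{n\in\Z}\in X_B$ one has $\lVert x_n+y_n\rVert\le\lVert x_n\rVert+\lVert y_n\rVert$ for all $n$; since $B$ is a linear subspace, $(\lVert x_n\rVert+\lVert y_n\rVert)_{n\in\Z}\in B$, and the defining property of a normed sequence space (solidity of the norm) then gives at once $(\lVert x_n+y_n\rVert)_{n\in\Z}\in B$ and $\lVert\mathbf x+\mathbf y\rVert_B\le\lVert\mathbf x\rVert_B+\lVert\mathbf y\rVert_B$. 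Absolute homogeneity is immediate, and $\lVert\mathbf x\rVert_B=0$ forces $x_n=0$ for every $n$ because $\lVert\cdot\rVert_B$ is a genuine norm on $B$.

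Next I would isolate the estimate
\begin{equation}\label{coord}
\lVert x_n\rVert\le\lVert\mathbf x\rVert_B\qquad\text{for all }n\in\Z\text{ and all }\mathbf x=(x_m)_{m\in\Z}\in X_B,
\end{equation}
which will be the workhorse of the completeness argument. It holds because the real sequence $\lVert x_n\rVert\,\chi_{\{n\}}$ is dominated entrywise, in absolute value, by $(\lVert x_m\rVert)_{m\in\Z}\in B$, so solidity together with the normalization $\lVert\chi_{\{n\}}\rVert_B=1$ yields $\lVert x_n\rVert=\lVert\,\lVert x_n\rVert\,\chi_{\{n\}}\,\rVert_B\le\lVert\mathbf x\rVert_B$. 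The very same computation carried out inside $B$ itself shows that the coordinate functionals $\mathbf s\mapsto s_n$ on $B$ are continuous; I will use this to identify limits.

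For completeness I would use the standard criterion that a normed space is Banach if and only if every absolutely convergent series converges. Let $(\mathbf x^{(k)})_{k\ge1}\subset X_B$ with $\mathbf x^{(k)}=(x^{(k)}_n)_{n\in\Z}$ and $M:=\sum_{k\ge1}\lVert\mathbf x^{(k)}\rVert_B<\infty$. By~\eqref{coord}, $\sum_{k\ge1}\lVert x^{(k)}_n\rVert\le M$ for each fixed $n$, so completeness of $X$ gives $x_n:=\sum_{k\ge1}x^{(k)}_n\in X$; set $\mathbf x:=(x_n)_{n\in\Z}$. The crux is to check $\mathbf x\in X_B$, which I would do by transporting the question to $B$: the nonnegative sequences $\mathbf f^{(k)}:=(\lVert x^{(k)}_n\rVert)_{n\in\Z}$ lie in $B$ and $\sum_k\lVert\mathbf f^{(k)}\rVert_B=M<\infty$, so, $B$ being complete, $\sum_k\mathbf f^{(k)}$ converges in $B$ to some $\mathbf g$ with $\lVert\mathbf g\rVert_B\le M$; by continuity of the coordinate functionals on $B$ we must have $g_n=\sum_{k\ge1}\lVert x^{(k)}_n\rVert$ for every $n$. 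Since $\lVert x_n\rVert\le g_n$ for all $n$, solidity gives $\mathbf x\in X_B$ with $\lVert\mathbf x\rVert_B\le M$. Running the same argument on the tails $\sum_{k>K}\mathbf f^{(k)}$ shows $\lVert\mathbf x-\sum_{k=1}^{K}\mathbf x^{(k)}\rVert_B\le\sum_{k>K}\lVert\mathbf x^{(k)}\rVert_B\to 0$ as $K\to\infty$, so the series converges to $\mathbf x$ in $X_B$, which completes the proof.

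The only step that is more than bookkeeping is recognizing the pointwise limit $\mathbf x$ as an element of $X_B$ --- a Fatou-type passage to the limit. What I would emphasize is that no separate Fatou hypothesis on $B$ is required: it is extracted, as above, from the completeness of $B$ itself (via the absolutely-convergent-series criterion) together with the continuity of the coordinate evaluations, which in turn follows from solidity and the normalization $\lVert\chi_{\{0\}}\rVert_B=1$. Every other step is just repeated application of the solidity of $\lVert\cdot\rVert_B$.
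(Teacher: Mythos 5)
The paper does not actually write out a proof of this proposition (it merely cites~\cite{DD, Sasu}), so there is no in-text argument to compare against. Your proof is correct and follows the natural route: reduce the normed-space axioms to solidity of $\lVert\cdot\rVert_B$, and prove completeness via the absolutely-convergent-series criterion, pushing the problem down to $B$ by passing to the sequences of norms. The genuinely important observation in your write-up is that you do \emph{not} invoke a Fatou property for $B$ (which, incidentally, can fail --- e.g.\ for $B=c_0$): instead you obtain the needed ``limit stays in $X_B$'' step from completeness of $B$ itself, by first constructing the norm-limit $\mathbf g=\sum_k\mathbf f^{(k)}$ in $B$ and only then identifying its coordinates via continuity of the coordinate evaluations. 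That is exactly the right way to sidestep the Fatou issue, and it is the crux of the argument.

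One small caveat worth noting: your coordinate estimate~\eqref{coord} in the sharp form $\lVert x_n\rVert\le\lVert\mathbf x\rVert_B$ uses $\chi_{\{n\}}\in B$ and the normalization $\lVert\chi_{\{n\}}\rVert_B=1$, which the paper introduces for \emph{admissible} $B$, whereas the proposition is stated for an arbitrary Banach sequence space $B$. This is harmless: for a general $B$, the coordinate functionals $\mathbf s\mapsto s_n$ on $B$ are still bounded (if $\chi_{\{n\}}\in B$ the bound is $1/\lVert\chi_{\{n\}}\rVert_B$; if $\chi_{\{n\}}\notin B$, solidity forces $s_n=0$ for every $\mathbf s\in B$, so the functional is zero), and your argument only needs boundedness for each fixed $n$, not the uniform constant $1$. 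Since the paper works exclusively with admissible $B$ anyway, this is a cosmetic point rather than a gap.
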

\subsection{Exponential dichotomies}\label{sec: exp dichot}
We now recall the notion of an exponential dichotomy which goes back to the landmark work of Perron~\cite{Per30}. 
Let $X=(X, \lVert \cdot \rVert)$ be an arbitrary Banach space and $(A_m)_{m\in \Z}$ a sequence of invertible and bounded linear operators on $X$ such that
\begin{equation}\label{ub}
\sup_{m\in \Z} \lVert A_m\rVert <\infty.
\end{equation}
Set
\[
\cA(m,n)=\begin{cases}
A_{m-1}\cdots A_n & \text{if $m>n$,} \\
\Id & \text{if $m=n$,} \\
A_m^{-1}\cdots A_{n-1}^{-1} & \text{if $m<n$.}
\end{cases}
\]
We say that the sequence $(A_m)_{m\in \Z}$ admits an \emph{exponential dichotomy} if:
\begin{enumerate}
\item there exists a sequence $(P_m)_{m\in \Z}$ of projections on $X$ such that 
\begin{equation}\label{P}
P_{m+1}A_m=A_mP_m \quad \text{for each $m\in \Z$;}
\end{equation}
\item there exist $C, \lambda >0$ such that
\begin{equation}\label{ED1}
\lVert \cA(m,n)P_n\rVert \le Ce^{-\lambda (m-n)} \quad \text{for $m\ge n$}
\end{equation}
and
\begin{equation}\label{ED2}
\lVert \cA(m,n)(\Id-P_n)\rVert \le Ce^{-\lambda (n-m)} \quad \text{for $m\le n$.}
\end{equation}
\end{enumerate}
Let $B$ be an admissible Banach sequence space. We define a bounded linear operator $\mathbb A \colon X_B\to X_B$ by 

\begin{displaymath}
(\mathbb A  \mathbf x)_n=A_{n-1}x_{n-1}, \quad  \text{for $n\in \Z$ and $\mathbf x=(x_n)_{n\in \Z} \in X_B$.}
\end{displaymath}
It follows easily from~\eqref{ub} that $\mathbb A$ is a well-defined and bounded linear operator on $X_B$.

The following result is only a particular case of the results established by Sasu~\cite{Sasu}.
\begin{theorem}\label{t1}
For a sequence $(A_m)_{m\in \Z}$ of invertible and bounded linear operators on $X$ satisfying~\eqref{ub} and an admissible Banach sequence space $B$, the following statements are equivalent:
\begin{enumerate}
\item $(A_m)_{m\in \Z}$ admits an exponential dichotomy;
\item $\Id-\mathbb A$ is an invertible operator on $X_B$.
\end{enumerate}
\end{theorem}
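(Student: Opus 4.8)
The plan is to prove the two implications separately; as noted just above, this is a particular case of results of Sasu~\cite{Sasu}, so I only outline the structure.

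For the implication (1) $\Rightarrow$ (2), suppose $(A_m)_{m\in\Z}$ admits an exponential dichotomy with projections $(P_m)_{m\in\Z}$ and constants $C,\lambda>0$. I would exhibit the inverse of $\Id-\mathbb{A}$ explicitly through the Green kernel
\[
G(n,k)=\begin{cases}\cA(n,k)P_k, & k\le n,\\ -\cA(n,k)(\Id-P_k), & k>n,\end{cases}
\]
and the operator $(\mathbb{G}\mathbf y)_n=\sum_{k\in\Z}G(n,k)y_k$. A short computation using~\eqref{P} shows that $G(n,k)-A_{n-1}G(n-1,k)=\delta_{n,k}\Id$, hence $(\Id-\mathbb{A})\mathbb{G}\mathbf y=\mathbf y$ at the formal level, while~\eqref{ED1}--\eqref{ED2} give $\lVert G(n,k)\rVert\le Ce^{-\lambda\lvert n-k\rvert}$ and therefore $\lVert(\mathbb{G}\mathbf y)_n\rVert\le C\sum_{j\in\Z}e^{-\lambda\lvert j\rvert}\lVert y_{n-j}\rVert$. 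With $\mathbf z=(\lVert y_n\rVert)_{n\in\Z}\in B$, the right-hand side is the $n$-th entry of the $B$-convergent series $C\sum_{j\in\Z}e^{-\lambda\lvert j\rvert}\mathbf z^{-j}$ (here I use the shift-invariance of $\lVert\cdot\rVert_B$), so by the lattice property $\mathbb{G}\mathbf y\in X_B$ with $\lVert\mathbb{G}\mathbf y\rVert_B\le\frac{C(1+e^{-\lambda})}{1-e^{-\lambda}}\lVert\mathbf y\rVert_B$; thus $\Id-\mathbb{A}$ is onto with bounded inverse. Injectivity is seen as follows: if $(\Id-\mathbb{A})\mathbf x=0$ then $x_n=\cA(n,m)x_m$ for all $n,m$, and since $\lVert\chi_{\{0\}}\rVert_B=1$ together with the lattice property force $\lVert x_n\rVert\le\lVert\mathbf x\rVert_B$ for every $n$, the relations $P_0x_0=\cA(0,n)P_nx_n$ (for $n\le0$) and $(\Id-P_0)x_0=\cA(0,n)(\Id-P_n)x_n$ (for $n\ge0$), combined with~\eqref{ED1}--\eqref{ED2}, give $x_0=0$ after letting $\lvert n\rvert\to\infty$.

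For the converse (2) $\Rightarrow$ (1), which is the substantial direction, put $K=\lVert(\Id-\mathbb{A})^{-1}\rVert$, so that the equation $x_n-A_{n-1}x_{n-1}=y_n$ has, for every $\mathbf y\in X_B$, a unique solution $\mathbf x\in X_B$ with $\lVert\mathbf x\rVert_B\le K\lVert\mathbf y\rVert_B$. For each $m$ I would let $S_m$ be the set of $\xi\in X$ such that the sequence $(u_n)_{n\in\Z}$ with $u_n=\cA(n,m)\xi$ for $n\ge m$ and $u_n=0$ for $n<m$ belongs to $X_B$, and $U_m$ the analogous set obtained by exchanging the roles of $n\ge m$ and $n<m$ (so $u_n=\cA(n,m)\xi$ for $n\le m$); these are the initial data at time $m$ with forward (resp.\ backward) orbit in $X_B$. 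Invertibility of the $A_m$ gives $A_mS_m=S_{m+1}$ and $A_mU_m=U_{m+1}$, and since a full orbit in $X_B$ solves $(\Id-\mathbb{A})\mathbf x=0$, injectivity yields $S_m\cap U_m=\{0\}$. Plugging the test sequence $\chi_{\{m+1\}}\xi$ (the element of $X_B$ equal to $\xi$ at index $m+1$ and $0$ elsewhere) into the a priori bound, the solution satisfies $x_m\in U_m$, $x_{m+1}\in S_{m+1}$, and $\xi=x_{m+1}-A_mx_m$; this simultaneously proves $X=S_{m+1}\oplus U_{m+1}$ and identifies $x_{m+1}$ with $P_{m+1}\xi$, where $P_m$ denotes the projection onto $S_m$ along $U_m$, which then satisfies $\lVert P_m\xi\rVert\le K\lVert\xi\rVert$ and~\eqref{P}. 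Feeding in $\chi_{\{m\}}\xi$ with $\xi\in S_m$ and $\chi_{\{m+1\}}A_m\xi$ with $\xi\in U_m$ then shows that the stable and unstable evolutions are uniformly bounded: $\sup_{n\ge m}\lVert\cA(n,m)P_m\rVert<\infty$ and $\sup_{n\le m}\lVert\cA(n,m)(\Id-P_m)\rVert<\infty$, uniformly in $m\in\Z$.

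The hard part, and the only genuinely delicate step, is to upgrade these uniform bounds to the exponential estimates~\eqref{ED1}--\eqref{ED2}. Here one plugs into the a priori bound finitely supported truncations of the stable (resp.\ unstable) orbits and balances the growth of $\lVert\chi_{\{0,1,\dots,j\}}\rVert_B$ in $j$ against the fixed constant $K$, once more exploiting the shift-invariance of $\lVert\cdot\rVert_B$; this is a discrete Datko-type argument that produces the uniform constants $C,\lambda$. In the body of the paper it is of course enough to quote the corresponding theorem of Sasu~\cite{Sasu} directly.
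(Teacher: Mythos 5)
Since the paper offers no proof of Theorem~\ref{t1} and simply cites Sasu~\cite{Sasu}, there is no in-paper argument to compare against; your outline is the standard ``admissibility'' argument, and it is essentially what the cited result does. Each intermediate step you sketch checks out: the Green-kernel identity $G(n,k)-A_{n-1}G(n-1,k)=\delta_{n,k}\Id$, the convolution bound via shift-invariance and the lattice property, injectivity from the pointwise estimate $\lVert x_n\rVert\le\lVert\mathbf x\rVert_B$ (which does follow from $\lVert\chi_{\{n\}}\rVert_B=1$ together with the lattice property), $S_m\cap U_m=\{0\}$ from injectivity of $\Id-\mathbb A$, the identification $P_{m+1}\xi=x_{m+1}$ and the direct sum $X=S_{m+1}\oplus U_{m+1}$ from the test datum $\chi_{\{m+1\}}\xi$, and the uniform stable and unstable bounds from the data $\chi_{\{m\}}\xi$ and $\chi_{\{m+1\}}A_m\xi$.

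The only slip is in the heuristic you give for the final Datko-type step. Balancing the growth of $\lVert\chi_{\{0,\dots,j\}}\rVert_B$ against the fixed constant $K$ is the right picture for $B=l^p$ with $p<\infty$, but $\lVert\chi_{\{0,\dots,j\}}\rVert_B\equiv 1$ for $B=l^\infty$ and for $B=c_0$, so there is no growth to exploit there --- and $l^\infty$ is precisely the space the paper uses in the Grobman--Hartman application. For those spaces the standard device is different: for $\xi\in S_m$ with $\lVert\xi\rVert=1$ and orbit $u_n=\cA(n,m)\xi$ (assumed nonzero), one plugs in the renormalized truncation $y_n=u_n/\lVert u_n\rVert$ for $m<n\le m+N$ and $0$ elsewhere, solves explicitly to get $x_n=u_n\sum_{k=m+1}^{\min(n,m+N)}\lVert u_k\rVert^{-1}$, and reads off from $\lVert x_{m+N}\rVert\le K$ first the algebraic bound $\lVert u_{m+N}\rVert\le K^2/N$ and then, after a bootstrap using the cocycle property and shift-invariance, the uniform exponential estimate. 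Since you explicitly defer to Sasu for this step, the outline is fine as a citation; but as a description of the mechanism it only covers part of the admissible spaces that the theorem, and the paper, actually uses.
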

One can use the previous result to establish the following one (see~\cite{Henry2} for example) which tells us that the notion of an exponential dichotomy is robust under small linear perturbations. 

\begin{theorem}\label{rob} 
Let $B$ be an admissible Banach sequence space and  
assume that $(A_m)_{m\in \Z}$ is a sequence of invertible and bounded operators on $X$ that admits an exponential dichotomy and satisfies~\eqref{ub}. Then, there exists $c>0$  such that for any sequence $(B_m)_{m\in \Z}$ of bounded linear operators on $X$ satisfying
\[
\sup_{m\in \Z} \lVert A_m-B_m \rVert \le c,
\]
we have that  $(B_m)_{m\in \Z}$ also admits an exponential dichotomy. Furthermore, there exists $K>0$ such that $\lVert (\Id-\mathbb B)^{-1}\rVert \le K$, where the operator $\mathbb B \colon X_B \to X_B$ is given by
\[
(\mathbb B  \mathbf x)_n=B_{n-1}x_{n-1}, \quad  \text{for $n\in \Z$ and $\mathbf x=(x_n)_{n\in \Z} \in X_B$.}
\]
\end{theorem}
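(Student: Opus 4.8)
The plan is to transfer the whole statement, via Theorem~\ref{t1}, into an operator-theoretic fact on $X_B$ and then treat $\mathbb B$ as a small-norm perturbation of $\mathbb A$. Since $(A_m)_{m\in\Z}$ admits an exponential dichotomy, Theorem~\ref{t1} gives that $\Id-\mathbb A$ is invertible on $X_B$; put $L:=\lVert(\Id-\mathbb A)^{-1}\rVert$. The first step is the elementary estimate $\lVert\mathbb A-\mathbb B\rVert\le\sup_{m\in\Z}\lVert A_m-B_m\rVert$: for $\mathbf x=(x_n)_{n\in\Z}\in X_B$ one has $((\mathbb A-\mathbb B)\mathbf x)_n=(A_{n-1}-B_{n-1})x_{n-1}$, so $\lVert((\mathbb A-\mathbb B)\mathbf x)_n\rVert\le(\sup_m\lVert A_m-B_m\rVert)\lVert x_{n-1}\rVert$; invoking the ideal property of a normed sequence space together with the shift-invariance of $\lVert\cdot\rVert_B$ (property~(2) of admissibility) this yields $\lVert(\mathbb A-\mathbb B)\mathbf x\rVert_B\le(\sup_m\lVert A_m-B_m\rVert)\lVert\mathbf x\rVert_B$, and in passing it shows $\mathbb B$ is a well-defined bounded operator on $X_B$.

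Next I would fix $c>0$ with $cL<1/2$ and, so that the perturbed operators remain invertible, also with $c\,\sup_m\lVert A_m^{-1}\rVert<1$ (one tacitly uses $\sup_m\lVert A_m^{-1}\rVert<\infty$ here). For any sequence $(B_m)_{m\in\Z}$ of bounded operators with $\sup_m\lVert A_m-B_m\rVert\le c$ we then have $\lVert\mathbb A-\mathbb B\rVert\le c$, and from
\[
\Id-\mathbb B=(\Id-\mathbb A)+(\mathbb A-\mathbb B)=(\Id-\mathbb A)\bigl(\Id+(\Id-\mathbb A)^{-1}(\mathbb A-\mathbb B)\bigr)
\]
the operator $(\Id-\mathbb A)^{-1}(\mathbb A-\mathbb B)$ has norm $\le cL<1$, so the second factor is invertible by a Neumann series. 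Hence $\Id-\mathbb B$ is invertible on $X_B$, with
\[
\lVert(\Id-\mathbb B)^{-1}\rVert\le\frac{L}{1-cL}\le 2L=:K,
\]
which is exactly the quantitative bound claimed.

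To finish, one deduces from the invertibility of $\Id-\mathbb B$ that $(B_m)_{m\in\Z}$ admits an exponential dichotomy, i.e.\ one applies the implication $(2)\Rightarrow(1)$ of Theorem~\ref{t1} to $(B_m)_{m\in\Z}$. Its standing hypotheses are immediate: $\lVert B_m\rVert\le\lVert A_m\rVert+c$ gives~\eqref{ub} for $(B_m)_{m\in\Z}$, and invertibility of each $B_m$ was arranged by the choice of $c$, writing $B_m=A_m\bigl(\Id-A_m^{-1}(A_m-B_m)\bigr)$. Theorem~\ref{t1} then produces the exponential dichotomy for $(B_m)_{m\in\Z}$.

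The whole content lies in the reduction, afforded by Theorem~\ref{t1}, of the exponential dichotomy to the invertibility of $\Id-\mathbb A$; once that is available, robustness is nothing but the stability of invertibility under perturbations of small operator norm, and the estimate for $(\Id-\mathbb B)^{-1}$ simply falls out of the Neumann series. I expect the only mildly delicate point to be the bookkeeping that keeps $(B_m)_{m\in\Z}$ within the class of sequences for which ``admits an exponential dichotomy'' is even defined --- in particular making sure each $B_m$ remains invertible.
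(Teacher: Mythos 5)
Your proposal is correct and is essentially the argument the paper delegates to Henry~\cite{Henry2}: reduce via Theorem~\ref{t1} to invertibility of $\Id-\mathbb A$ on $X_B$, bound $\lVert\mathbb A-\mathbb B\rVert$ by $\sup_m\lVert A_m-B_m\rVert$ using shift-invariance and the ideal property of $B$, and close with a Neumann series to get both the invertibility of $\Id-\mathbb B$ and the uniform estimate $K$. Your aside that one tacitly needs $\sup_m\lVert A_m^{-1}\rVert<\infty$ to keep each $B_m$ invertible is a genuine observation --- that bound does not follow from \eqref{ub} together with the exponential dichotomy, so the conclusion that $(B_m)$ ``admits an exponential dichotomy'' in the paper's sense (which requires $\cA(m,n)$ for $m<n$) is slightly informal as stated --- but the quantitative bound on $(\Id-\mathbb B)^{-1}$, which is what the paper actually uses downstream, you obtain rigorously.
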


\begin{remark}\label{4:01}
In fact, by carefully  inspecting  the proof of Theorem~\ref{t1} one can  conclude that we can choose constants $C, \lambda >0$ in the notion of an exponential dichotomy uniformly over all sequences $(B_m)_{m\in \Z}$ satisfying the assumptions of Theorem~\ref{rob}. 
\end{remark}
\subsection{A fixed point theorem}
We will also use the following classical consequence of the Banach fixed point theorem (see~\cite{Lanford} for example).
\begin{theorem}\label{FPT}
Let $Z$ be a Banach space and $A$ a differentiable map defined on a neighborhood of $0\in Z$. Furthermore, let $\Gamma$ be a bounded linear operator on $Z$ such that $\Id-\Gamma$ is invertible. Finally, suppose that there exist $\rho>0$ and $\kappa \in (0, 1)$ such that:
\begin{enumerate}
\item for each $z\in Z$ satisfying $\lVert z\rVert \le \rho$,
\[
\lVert (\Id-\Gamma)^{-1}\rVert \cdot \lVert d_zA-\Gamma\rVert \le \kappa;
\]
\item \[
\lVert (\Id-\Gamma)^{-1}\rVert \cdot \lVert A(0)\rVert \le (1-\kappa)\rho.
\]

\end{enumerate}
Then, $A$ has a unique fixed point in $\{z\in Z: \lVert z\rVert \le \rho\}$.
\end{theorem}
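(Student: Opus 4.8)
The plan is to reduce the statement to a routine application of the Banach fixed point theorem by replacing $A$ with an auxiliary map having the same fixed points. Since $\Id-\Gamma$ is invertible, for $z$ in the domain of $A$ we have $A(z)=z$ if and only if $A(z)-\Gamma z=(\Id-\Gamma)z$, i.e. if and only if
$T(z):=(\Id-\Gamma)^{-1}\big(A(z)-\Gamma z\big)=z$. Hence it suffices to prove that $T$ has a unique fixed point in the closed ball $\overline B:=\{z\in Z:\lVert z\rVert\le \rho\}$. Note that hypothesis (1) implicitly requires $A$ to be differentiable at every $z$ with $\lVert z\rVert\le\rho$, so we may assume $\overline B$ is contained in the domain of $A$ and that $T$ is well defined and differentiable on $\overline B$.

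First I would check that $T$ is a $\kappa$-contraction on $\overline B$. Its Fr\'echet derivative is $d_zT=(\Id-\Gamma)^{-1}(d_zA-\Gamma)$, so by hypothesis (1), $\lVert d_zT\rVert\le\lVert(\Id-\Gamma)^{-1}\rVert\cdot\lVert d_zA-\Gamma\rVert\le\kappa$ for every $z\in\overline B$. Since $\overline B$ is convex, the mean value inequality in Banach spaces (which one obtains, for instance, by composing the segment $t\mapsto T((1-t)w+tz)$ with an arbitrary norm-one functional and applying the scalar mean value theorem) gives $\lVert T(z)-T(w)\rVert\le\kappa\lVert z-w\rVert$ for all $z,w\in\overline B$.

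Next I would verify that $T$ maps $\overline B$ into itself. Since $T(0)=(\Id-\Gamma)^{-1}A(0)$, hypothesis (2) yields $\lVert T(0)\rVert\le\lVert(\Id-\Gamma)^{-1}\rVert\cdot\lVert A(0)\rVert\le(1-\kappa)\rho$. Combining this with the contraction estimate, for $z\in\overline B$ we get $\lVert T(z)\rVert\le\lVert T(z)-T(0)\rVert+\lVert T(0)\rVert\le\kappa\lVert z\rVert+(1-\kappa)\rho\le\kappa\rho+(1-\kappa)\rho=\rho$, so $T(\overline B)\subset\overline B$.

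Finally, $\overline B$ is a complete metric space, being a closed subset of the Banach space $Z$, and $T\colon\overline B\to\overline B$ is a contraction, so the Banach fixed point theorem provides a unique fixed point of $T$ in $\overline B$; by the equivalence above this is precisely the unique fixed point of $A$ in $\overline B$. There is no genuine obstacle here: the argument is entirely standard, and the only points demanding a little care are the reduction to the map $T$ and the use of the mean value inequality in the Banach-space setting (together with the implicit assumption that $\rho$ is small enough for $\overline B$ to lie in the domain of $A$).
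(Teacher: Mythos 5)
Your proof is correct, but note that the paper does not actually prove this theorem: it states it as a classical consequence of the Banach fixed point theorem and simply cites Lanford's lecture notes. The argument you give — passing to the auxiliary map $T(z)=(\Id-\Gamma)^{-1}\bigl(A(z)-\Gamma z\bigr)$, bounding $\lVert d_zT\rVert\le\kappa$ via hypothesis (1) and the mean value inequality so that $T$ is a $\kappa$-contraction, using hypothesis (2) to check $T$ maps the closed ball $\{\lVert z\rVert\le\rho\}$ into itself, and invoking Banach's theorem — is exactly the standard proof that the authors leave to the reference, so there is nothing to add or to contrast.
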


\section{Main results}\label{MR}

\subsection{Setup} \label{sec: setup}
Let $B$ be an admissible Banach sequence space,  $X$  a  Banach space and $(A_m)_{m\in \Z}$ a sequence of invertible and bounded linear operators on $X$ that admits an exponential dichotomy and satisfies~\eqref{ub}. Moreover, let $c>0$ be a constant given by Theorem~\ref{rob}.
 Finally, let $f_n \colon X\to X$, $n\in \Z$ be a sequence of differentiable maps such that:
\begin{enumerate}
\item
\begin{equation}\label{fg}
\lVert d_xf_n\rVert \le c\quad \text{for $x\in X$ and $n\in \Z$;}
\end{equation}
\item \begin{equation}\label{UB} \sup_{m\in \Z} \lVert f_m\rVert_{\sup} <\infty,\end{equation}
where $\lVert f_m\rVert_{\sup}:=\sup_{x\in X} \lVert f_m(x)\rVert$;
\item there exist $D>0$ and $r>0$ such that
\begin{equation}\label{der}
\lVert d_xf_n-d_yf_n\rVert \le D\lVert x-y\rVert^r \quad \text{for $x, y\in X$ and $n\in \Z$.}
\end{equation}
\end{enumerate}
We consider a nonautonomous and nonlinear dynamics defined by the equation
\begin{equation}\label{nnd}
x_{n+1}=F_n(x_n),  \quad n\in \Z,
\end{equation}
where \[F_n:=A_n+f_n.\]
We now introduce a notion of a pseudotrajectory associated with the system~\eqref{nnd}. Given $\delta >0$, the sequence $(y_n)_{n\in \Z} \subset X$ is said to be an $(\delta, B)$-\emph{pseudotrajectory} for~\eqref{nnd} if  
$(y_{n+1}-F_n(y_n))_{n\in \Z}\in X_B$ and 
\begin{equation}\label{pseudo}
\lVert (y_{n+1}-F_n(y_n))_{n\in \Z} \rVert_B \le \delta. 
\end{equation}
\begin{remark}
When $B=l^\infty$ (see Example~\ref{ex1}), condition~\eqref{pseudo} reduces to 
\[
\sup_{n\in \Z}  \lVert y_{n+1}-F_n(y_n) \rVert \le \delta.
\]
The above requirement represents a usual definition of a pseudotrajectory in the context of smooth dynamics (see~\cite{Pal00, Pil99}). 
\end{remark}

We say that~\eqref{nnd} has an \emph{$B$-shadowing property} if for every $\varepsilon>0$ there exists $\delta >0$ so that for every $(\delta, B)$-pseudotrajectory $(y_n)_{n\in \Z}$, there exists a sequence $(x_n)_{n\in \Z}$ satisfying \eqref{nnd}  and such that 
$(x_n-y_n)_{n\in \Z} \in X_B$ together with
\begin{equation}\label{wv}\lVert (x_n-y_n)_{n\in \Z}\rVert_B \le \varepsilon.\end{equation}
Moreover, if there exists $L>0$ such that $\delta$ can be chosen as $\delta=L\epsilon$, we say that~\eqref{nnd} has the \emph{$B$-Lipschitz shadowing property}.
\begin{remark}
In the case when $B=l^\infty$ the above definition of shadowing is the classical one~\cite{Pal00, Pil99}. On the other hand, if $B=c_0$ (see Example~\ref{ex2}) we speak about \emph{limit shadowing}, while in the case when $B=l^p$ (see Example~\ref{ex3}) we speak about \emph{$l^p$-shadowing}. 
Hence, our approach offers a unified treatment of various concepts of shadowing. 
\end{remark}

Our main objective is to show that under above assumptions, \eqref{nnd} has the  $B$- Lipschitz shadowing property. We begin with some simple  auxiliary results.
\subsection{Lemmata} \label{sec: lemmata}
\begin{lemma}
Let $\delta >0$ and assume that   $(y_n)_{n\in \Z}$ is  an $(\delta, B)$-pseudotrajectory for~\eqref{nnd}.
Furthermore, we define a map $\mathbf A \colon X_B \to X_B$ given by
\begin{displaymath}
(\mathbf A(\mathbf x))_n=F_{n-1}(y_{n-1}+x_{n-1})-y_n, 
\end{displaymath}
for $n\in \Z$ and $\mathbf x=(x_n)_{n\in \Z} \in X_B$. Then, we have that $\mathbf A$ is a well-defined map. 
\end{lemma}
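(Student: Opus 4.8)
The plan is to use the decomposition $F_{n-1}=A_{n-1}+f_{n-1}$ and write, for $\mathbf x=(x_n)_{n\in\Z}\in X_B$,
\[
(\mathbf A(\mathbf x))_n = A_{n-1}x_{n-1} + \bigl(F_{n-1}(y_{n-1})-y_n\bigr) + \bigl(f_{n-1}(y_{n-1}+x_{n-1})-f_{n-1}(y_{n-1})\bigr),\qquad n\in\Z.
\]
Since $X_B$ is a linear space, it then suffices to check that each of the three sequences on the right-hand side belongs to $X_B$. The first one is precisely $\mathbb A\mathbf x$, which lies in $X_B$ because $\mathbb A$ is a bounded linear operator on $X_B$. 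The second one is, up to sign and an index shift by one, the sequence $(y_{n+1}-F_n(y_n))_{n\in\Z}$; this sequence belongs to $X_B$ by the definition of an $(\delta,B)$-pseudotrajectory (see~\eqref{pseudo}), and hence so does its shift by the admissibility (shift-invariance) property of $B$.

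The third and only nontrivial term is the nonlinear increment. Here I would apply the mean value inequality in Banach spaces to the differentiable map $f_{n-1}$ along the segment joining $y_{n-1}$ and $y_{n-1}+x_{n-1}$: using the uniform derivative bound~\eqref{fg}, one gets
\[
\lVert f_{n-1}(y_{n-1}+x_{n-1})-f_{n-1}(y_{n-1})\rVert \le c\,\lVert x_{n-1}\rVert,\qquad n\in\Z.
\]
The sequence $(c\lVert x_{n-1}\rVert)_{n\in\Z}$ belongs to $B$: indeed $(\lVert x_n\rVert)_{n\in\Z}\in B$ since $\mathbf x\in X_B$, its shift is again in $B$ by admissibility, and multiplying by the scalar $c$ keeps it in $B$ because $B$ is a linear space. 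Since $B$ is a normed sequence space, the solidity property (domination of absolute values) then forces $\bigl(\lVert f_{n-1}(y_{n-1}+x_{n-1})-f_{n-1}(y_{n-1})\rVert\bigr)_{n\in\Z}\in B$, i.e. the third sequence lies in $X_B$. Adding the three contributions gives $\mathbf A(\mathbf x)\in X_B$ for every $\mathbf x\in X_B$, which is exactly the claim.

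The only step that requires any genuine argument is the control of the nonlinear term, and it hinges on combining the uniform bound~\eqref{fg} on the derivatives (via the mean value inequality) with the solidity of the sequence space $B$, so that pointwise domination by an element of $B$ yields membership in $B$; the other two terms are bookkeeping. It is perhaps worth remarking that neither the boundedness assumption~\eqref{UB} nor the Hölder condition on the derivatives is needed for this lemma: the estimate~\eqref{fg} alone guarantees that $\mathbf A$ is well defined.
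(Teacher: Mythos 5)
Your proof is correct and follows essentially the same route as the paper: both decompose $(\mathbf A(\mathbf x))_n$ into the linear part $A_{n-1}x_{n-1}$, the nonlinear increment $f_{n-1}(y_{n-1}+x_{n-1})-f_{n-1}(y_{n-1})$, and the defect $F_{n-1}(y_{n-1})-y_n$, then control the nonlinear increment via the mean value inequality and~\eqref{fg}, and invoke~\eqref{ub},~\eqref{pseudo}, and the solidity of $B$. Your version is a bit more explicit about where the shift-invariance (admissibility) of $B$ enters, and the closing remark that~\eqref{UB} and~\eqref{der} are not needed for this particular lemma is a correct and worthwhile observation.
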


\begin{proof}
Observe that 
\[
\begin{split}
(\A(\mathbf x))_n &=F_{n-1}(y_{n-1}+x_{n-1})-y_n \\
&=F_{n-1}(y_{n-1}+x_{n-1})-F_{n-1}(y_{n-1})+F_{n-1}(y_{n-1})-y_n \\
&=A_{n-1}x_{n-1}+f_{n-1}(y_{n-1}+x_{n-1})-f_{n-1}(y_{n-1}) \\
&\phantom{=}+F_{n-1}(y_{n-1})-y_n,
\end{split}
\]
for every $n\in \Z$. Then, it follows from~\eqref{fg} that 
\[
\lVert (\A(\mathbf x))_n \rVert \le \big{(}\sup_{m\in \Z} \lVert A_m\rVert \big{)}\cdot \lVert x_{n-1}\rVert +c\lVert x_{n-1}\rVert+\lVert F_{n-1}(y_{n-1})-y_n\rVert,
\]
for each $n\in \Z$ and $\mathbf x=(x_n)_{n\in \Z} \in X_B$. In a view of~\eqref{ub} and~\eqref{pseudo}, we  conclude  that $\A(\mathbf x)\in X_B$. 
\end{proof}

\begin{lemma}\label{L} 
The  map $\A \colon X_B \to X_B$ is differentiable and 
\[
(d_{\mathbf x}\A\mathbf{\boldsymbol{\xi}})_n=A_{n-1}\xi_{n-1}+d_{x_{n-1}+y_{n-1}}f_{n-1}\xi_{n-1},
\]
for $\mathbf x=(x_n)_{n\in \Z}$, $\boldsymbol{\xi}=(\xi_n)_{n\in \Z} \in X_B$.
\end{lemma}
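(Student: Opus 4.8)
The plan is to verify differentiability of $\mathbf A$ directly from the definition, reducing it to the differentiability of each coordinate map $x_{n-1} \mapsto F_{n-1}(y_{n-1} + x_{n-1})$ and then controlling the remainder uniformly in $n$ using the H\"older condition~\eqref{der}. Concretely, fix $\mathbf x = (x_n)_{n\in\Z} \in X_B$ and define the candidate derivative $T = T_{\mathbf x}$ by $(T\boldsymbol\xi)_n = A_{n-1}\xi_{n-1} + d_{x_{n-1}+y_{n-1}}f_{n-1}\,\xi_{n-1}$. First I would check that $T$ is a well-defined bounded linear operator on $X_B$: linearity is immediate, and boundedness follows from $\lVert (T\boldsymbol\xi)_n\rVert \le (\sup_m \lVert A_m\rVert + c)\lVert \xi_{n-1}\rVert$ together with the shift-invariance of the norm on the admissible space $B$, exactly as in the previous lemma. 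This gives $\lVert T\boldsymbol\xi\rVert_B \le (\sup_m\lVert A_m\rVert + c)\lVert\boldsymbol\xi\rVert_B$.

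Next I would estimate the remainder $R(\boldsymbol\xi) := \mathbf A(\mathbf x + \boldsymbol\xi) - \mathbf A(\mathbf x) - T\boldsymbol\xi$. Coordinatewise, using $F_{n-1} = A_{n-1} + f_{n-1}$, the linear part $A_{n-1}$ cancels and one is left with
\[
R(\boldsymbol\xi)_n = f_{n-1}(y_{n-1}+x_{n-1}+\xi_{n-1}) - f_{n-1}(y_{n-1}+x_{n-1}) - d_{x_{n-1}+y_{n-1}}f_{n-1}\,\xi_{n-1}.
\]
By the fundamental theorem of calculus applied to $t \mapsto f_{n-1}(y_{n-1}+x_{n-1}+t\xi_{n-1})$, this equals $\int_0^1 \big(d_{x_{n-1}+y_{n-1}+t\xi_{n-1}}f_{n-1} - d_{x_{n-1}+y_{n-1}}f_{n-1}\big)\xi_{n-1}\,dt$, so the H\"older bound~\eqref{der} yields $\lVert R(\boldsymbol\xi)_n\rVert \le D\lVert \xi_{n-1}\rVert^{1+r} \le D\lVert\boldsymbol\xi\rVert_B^{r}\,\lVert\xi_{n-1}\rVert$, where in the last step I used $\lVert\xi_{n-1}\rVert \le \lVert\boldsymbol\xi\rVert_B$ (a consequence of $\lVert\chi_{\{0\}}\rVert_B = 1$ and the order property of normed sequence spaces). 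Hence $(R(\boldsymbol\xi)_n)_{n\in\Z}$ is dominated entrywise by the shifted sequence $D\lVert\boldsymbol\xi\rVert_B^{r}(\lVert\xi_{n-1}\rVert)_{n\in\Z} \in B$, so $R(\boldsymbol\xi) \in X_B$ and $\lVert R(\boldsymbol\xi)\rVert_B \le D\lVert\boldsymbol\xi\rVert_B^{1+r}$, again invoking shift-invariance of $\lVert\cdot\rVert_B$.

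Finally, dividing by $\lVert\boldsymbol\xi\rVert_B$ gives $\lVert R(\boldsymbol\xi)\rVert_B / \lVert\boldsymbol\xi\rVert_B \le D\lVert\boldsymbol\xi\rVert_B^{r} \to 0$ as $\boldsymbol\xi \to 0$, which is precisely the statement that $\mathbf A$ is (Fr\'echet) differentiable at $\mathbf x$ with $d_{\mathbf x}\mathbf A = T$, i.e.\ the claimed formula holds. The main obstacle — really the only subtle point — is making sure the pointwise H\"older estimate upgrades to an estimate in the $X_B$-norm; this is exactly where admissibility is used, since the order property lets one dominate $(R(\boldsymbol\xi)_n)_n$ by an element of $B$ and the shift-invariance converts $(\lVert\xi_{n-1}\rVert)_n$ into something with the same norm as $(\lVert\xi_n\rVert)_n = \lVert\boldsymbol\xi\rVert_B$. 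Everything else is the standard first-order Taylor estimate carried out uniformly in the index $n$ thanks to the uniform constant $D$ in~\eqref{der}.
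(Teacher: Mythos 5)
Your proposal is correct and follows essentially the same route as the paper: define the candidate derivative, check it is a bounded operator on $X_B$, write the remainder via the fundamental theorem of calculus, apply the H\"older estimate~\eqref{der}, and pass from the coordinatewise bound to the $X_B$-norm bound using the order property and shift-invariance of $B$. The only cosmetic difference is that the paper keeps the sharper constant $D/(1+r)$ from $\int_0^1 t^r\,dt$, whereas you use the cruder bound $D$; this is immaterial for the $o(\lVert\boldsymbol\xi\rVert_B)$ conclusion.
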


\begin{proof}
Let us fix $\mathbf x=(x_n)_{n\in \Z} \in X_B$ and define a linear operator $L\colon X_B \to X_B$ by
\[
(L\boldsymbol{\xi})_n=A_{n-1}\xi_{n-1}+d_{x_{n-1}+y_{n-1}}f_{n-1}\xi_{n-1}, \quad \text{for $\boldsymbol{\xi}=(\xi_n)_{n\in \Z}\in X_B$.}
\]
It follows readily from~\eqref{ub} and~\eqref{fg} that $L$ is a well-defined and bounded operator.  Moreover, for $\mathbf{h}=(h_n)_{n\in \Z}\in X_B$ we have that
\[
\begin{split}
(\A(\mathbf x+\mathbf h)-\A(\mathbf x)-L\mathbf h)_n &=f_{n-1}(x_{n-1}+y_{n-1}+h_{n-1})-f_{n-1}(x_{n-1}+y_{n-1}) \\
&\phantom{=}-d_{x_{n-1}+y_{n-1}}f_{n-1}h_{n-1} \\
&=\int_0^1 d_{x_{n-1}+y_{n-1}+th_{n-1}}f_{n-1}h_{n-1} \, dt \\
&\phantom{=}-\int_0^1 d_{x_{n-1}+y_{n-1}}f_{n-1}h_{n-1}\, dt.
\end{split}
\]
By~\eqref{der}, 
\begin{align*}
& \lVert (\A(\mathbf x+\mathbf h)-\A(\mathbf x)-L\mathbf h)_n \rVert  \displaybreak[0]\\
&\le \int_0^1 \lVert d_{x_{n-1}+y_{n-1}+th_{n-1}}f_{n-1}h_{n-1}-d_{x_{n-1}+y_{n-1}}f_{n-1}h_{n-1} \rVert \, dt \displaybreak[0]\\
&\le \frac{D}{1+r}\lVert h_{n-1}\rVert^{1+r}.
\end{align*}
Noting that $\lVert h_n \rVert \le \lVert \mathbf h\rVert_B$, we have that
\[
\lVert (\A(\mathbf x+\mathbf h)-\A(\mathbf x)-L\mathbf h)_n \rVert \le \frac{D}{1+r} \lVert \mathbf h\rVert_B^r \cdot \lVert h_{n-1}\rVert,
\]
and consequently 
\[
\lVert \A(\mathbf x+\mathbf h)-\A(\mathbf x)-L\mathbf h \rVert_B \le \frac{D}{1+r} \lVert \mathbf h\rVert_B^{r+1}.
\]
Hence, 
\[
\lim_{\mathbf h\to 0}\frac{\lVert \A(\mathbf x+\mathbf h)-\A(\mathbf x)-L\mathbf h\rVert_{B}}{\lVert \mathbf h\rVert_B}=0,
\]
which implies the desired conclusion.
\end{proof}
Set $\Gamma:=d_{\mathbf 0}\A$. It follows from Lemma~\ref{L} that 
\[
(\Gamma  \boldsymbol{\xi})_n=A_{n-1}\xi_{n-1}+d_{y_{n-1}}f_{n-1}\xi_{n-1},
\]
for $\boldsymbol{\xi}=(\xi_n)_{n\in \Z} \in X_B$ and $n\in \Z$.
\begin{lemma}\label{lem: invertibility}
We have that $\Id-\Gamma$ is an invertible operator on $X_B$. Moreover, there exists a constant $K>0$, independent on the pseudotrajectory $\mathbf y$ such that  \[ \lVert (\Id-\Gamma)^{-1}\rVert \leq K.\]
\end{lemma}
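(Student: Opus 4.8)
The key observation is that $\Gamma$ is exactly the operator $\mathbb B$ associated (in the sense of Theorem~\ref{rob}) to the perturbed sequence of linear operators obtained by adding the derivatives of the $f_n$ along the pseudotrajectory. Concretely, I would set $B_n := A_n + d_{y_n}f_n$ for each $n\in\Z$. By~\eqref{fg} each $d_{y_n}f_n$ is a bounded linear operator on $X$ with $\lVert d_{y_n}f_n\rVert\le c$, and hence $B_n$ is a bounded linear operator and
\[
\sup_{n\in\Z}\lVert A_n-B_n\rVert=\sup_{n\in\Z}\lVert d_{y_n}f_n\rVert\le c,
\]
where $c>0$ is precisely the constant furnished by Theorem~\ref{rob} in the Setup. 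Comparing the formula for $\Gamma$ displayed just before the statement with the definition of $\mathbb B$ in Theorem~\ref{rob}, one sees that $\Gamma=\mathbb B$, i.e.\ $(\Gamma\boldsymbol\xi)_n=B_{n-1}\xi_{n-1}$ for all $n\in\Z$ and all $\boldsymbol\xi\in X_B$.

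Next I would simply invoke Theorem~\ref{rob}: since $(A_m)_{m\in\Z}$ admits an exponential dichotomy and satisfies~\eqref{ub}, and since $\sup_{m\in\Z}\lVert A_m-B_m\rVert\le c$, the theorem guarantees that $(B_m)_{m\in\Z}$ also admits an exponential dichotomy and, more importantly for us, that $\Id-\mathbb B=\Id-\Gamma$ is an invertible operator on $X_B$ with $\lVert(\Id-\mathbb B)^{-1}\rVert\le K$ for the constant $K>0$ provided by that theorem.

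Finally, for the last assertion I would point out that the constant $K$ in Theorem~\ref{rob} depends only on $B$, on the sequence $(A_m)_{m\in\Z}$ (through its exponential dichotomy constants and the bound~\eqref{ub}), and on $c$ — but \emph{not} on the particular perturbation $(B_m)_{m\in\Z}$ satisfying the $c$-closeness hypothesis. Since the sequence $(B_m)$ we constructed above is the only place where the pseudotrajectory $\mathbf y$ enters, it follows that $K$ is independent of $\mathbf y$, which is exactly the uniform bound claimed in Lemma~\ref{lem: invertibility}.

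\textbf{Main obstacle.} There is no real difficulty here: the entire content of the lemma is the identification $\Gamma=\mathbb B$ together with the verification that the $c$-smallness hypothesis of Theorem~\ref{rob} holds uniformly in $\mathbf y$, which is immediate from~\eqref{fg}. The only point requiring a moment's care is making sure that the constant $K$ one quotes from Theorem~\ref{rob} is genuinely uniform over all admissible perturbations, so that it does not secretly depend on $\mathbf y$; this is guaranteed by the way $c$ and $K$ are produced in that theorem (and is consistent with Remark~\ref{4:01}).
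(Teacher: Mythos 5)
Your argument is correct and is precisely the argument the paper intends (the paper's proof is a one-line citation of Theorems~\ref{t1} and~\ref{rob} together with~\eqref{fg} and the choice of $c$, which you have simply unpacked). The identification $\Gamma=\mathbb B$ with $B_n=A_n+d_{y_n}f_n$, the bound $\sup_n\lVert A_n-B_n\rVert\le c$ from~\eqref{fg}, and the uniform bound $K$ from Theorem~\ref{rob} are exactly the intended steps.
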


\begin{proof}
The statement follows directly from Theorems~\ref{t1} and~\ref{rob} together with~\eqref{fg} (and the choice of $c$). 
\end{proof}

\subsection{Main results} \label{sec: main results}
The following is our  main result. 
\begin{theorem}\label{cor: uniqueness}
The system~\eqref{nnd} has an $B$-Lipschitz shadowing property. Furthermore, if $\varepsilon$ is sufficiently small, the sequence $\mathbf x=(x_n)_{n\in \Z}$ satisfying~\eqref{nnd} and~\eqref{wv} is unique. 
\end{theorem}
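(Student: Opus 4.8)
The plan is to reformulate the shadowing problem as a fixed point problem for the map $\mathbf A$ from Section~\ref{sec: lemmata} and then apply Theorem~\ref{FPT}. Writing a candidate shadowing sequence as $x_n=y_n+z_n$, the relation $x_{n+1}=F_n(x_n)$ for all $n\in\Z$ is equivalent to $z_n=F_{n-1}(y_{n-1}+z_{n-1})-y_n=(\mathbf A(\mathbf z))_n$ for all $n\in\Z$; that is, sequences $\mathbf x$ satisfying~\eqref{nnd} with $(x_n-y_n)_{n\in\Z}\in X_B$ correspond bijectively to fixed points $\mathbf z\in X_B$ of $\mathbf A$ via $\mathbf z=(x_n-y_n)_{n\in\Z}$, and $\lVert(x_n-y_n)_{n\in\Z}\rVert_B=\lVert\mathbf z\rVert_B$. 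Thus it suffices to produce, for every $\varepsilon>0$, a $\delta>0$ of the form $\delta=L\varepsilon$ so that whenever $\mathbf y$ is a $(\delta,B)$-pseudotrajectory, $\mathbf A$ has a fixed point in $\{\mathbf z\in X_B:\lVert\mathbf z\rVert_B\le\varepsilon\}$, and that this fixed point is unique there when $\varepsilon$ is small.

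Next I would verify the hypotheses of Theorem~\ref{FPT} with $Z=X_B$, $A=\mathbf A$ and $\Gamma=d_{\mathbf 0}\mathbf A$. Differentiability of $\mathbf A$ and the formula for $d_{\mathbf z}\mathbf A$ are Lemma~\ref{L}, and Lemma~\ref{lem: invertibility} gives that $\Id-\Gamma$ is invertible with $\lVert(\Id-\Gamma)^{-1}\rVert\le K$ for a constant $K$ independent of $\mathbf y$. For condition~(1), Lemma~\ref{L} yields $((d_{\mathbf z}\mathbf A-\Gamma)\boldsymbol\xi)_n=(d_{z_{n-1}+y_{n-1}}f_{n-1}-d_{y_{n-1}}f_{n-1})\xi_{n-1}$, so by~\eqref{der}, $\lVert((d_{\mathbf z}\mathbf A-\Gamma)\boldsymbol\xi)_n\rVert\le D\lVert z_{n-1}\rVert^r\lVert\xi_{n-1}\rVert\le D\rho^r\lVert\xi_{n-1}\rVert$ whenever $\lVert\mathbf z\rVert_B\le\rho$; invoking the shift-invariance of the norm on $X_B$ (which comes from admissibility of $B$) and the defining domination property of a normed sequence space, this gives $\lVert d_{\mathbf z}\mathbf A-\Gamma\rVert\le D\rho^r$. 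For condition~(2), note $(\mathbf A(\mathbf 0))_n=F_{n-1}(y_{n-1})-y_n$, so $\mathbf A(\mathbf 0)$ is, up to an index shift and a sign, the sequence $(y_{n+1}-F_n(y_n))_{n\in\Z}$; hence $\lVert\mathbf A(\mathbf 0)\rVert_B\le\delta$ by~\eqref{pseudo} and admissibility.

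With these estimates the choice of constants is routine. Fix $\kappa=1/2$, choose $\rho_0>0$ with $KD\rho_0^r\le 1/2$, and given $\varepsilon>0$ set $\rho=\min\{\varepsilon,\rho_0\}$ and $\delta=\rho/(2K)$. Then condition~(1) reads $KD\rho^r\le\kappa$ and condition~(2) reads $K\delta=\rho/2=(1-\kappa)\rho$, so Theorem~\ref{FPT} produces a fixed point $\mathbf z$ of $\mathbf A$ with $\lVert\mathbf z\rVert_B\le\rho\le\varepsilon$, which translates back to a sequence $\mathbf x$ satisfying~\eqref{nnd} and~\eqref{wv}. For $\varepsilon\le\rho_0$ we have $\delta=\varepsilon/(2K)$, which exhibits $L=1/(2K)$ as a Lipschitz constant and establishes the $B$-Lipschitz shadowing property (for larger $\varepsilon$ one simply keeps $\delta=\rho_0/(2K)$). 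Finally, when $\varepsilon\le\rho_0$ we may take $\rho=\varepsilon$, and the uniqueness assertion in Theorem~\ref{FPT} says $\mathbf z$ is the only fixed point of $\mathbf A$ in $\{\lVert\mathbf z\rVert_B\le\varepsilon\}$; by the correspondence above, the sequence $\mathbf x$ satisfying~\eqref{nnd} and~\eqref{wv} is unique.

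I expect the only genuinely delicate points to be bookkeeping: tracking the index shift in $\mathbf A(\mathbf 0)$ and in the derivative formula so that admissibility of $B$ can be used to preserve norms, and organizing $\rho$, $\kappa$, $\delta$ so that a single Lipschitz constant works while still delivering a uniqueness ball of radius exactly $\varepsilon$ for small $\varepsilon$. All of the analytic substance—the differentiability of $\mathbf A$, the H\"older bound on $d\mathbf A-\Gamma$, and the uniform invertibility of $\Id-\Gamma$—has already been isolated in Lemmata~\ref{L} and~\ref{lem: invertibility}, so no further hard estimate should be required.
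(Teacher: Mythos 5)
Your proposal matches the paper's proof in all essentials: the reformulation of shadowing as a fixed-point problem for $\mathbf A$ on $X_B$, the verification of the hypotheses of Theorem~\ref{FPT} using Lemmas~\ref{L} and~\ref{lem: invertibility}, the H\"older bound $\lVert d_{\mathbf z}\mathbf A-\Gamma\rVert\le D\lVert\mathbf z\rVert_B^r$ from~\eqref{der}, and the choice $\kappa=1/2$, $\rho=\varepsilon$. The only (immaterial) difference is the Lipschitz constant — you take $L=1/(2K)$, the paper takes $L=1/(4K)$; both satisfy the required inequalities.
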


\begin{proof}
We wish to apply Theorem~\ref{FPT}. It follows from~\eqref{der} that 
\[
\lVert ((d_{\mathbf z}\A-\Gamma){\boldsymbol \xi})_n\rVert \le D\lVert z_{n-1}\rVert^r \cdot \lVert \xi_{n-1}\rVert \le D \lVert \mathbf z\rVert_B^r \cdot \lVert \xi_{n-1}\rVert , 
\]
and thus
\[
\lVert d_{\mathbf z}\A-\Gamma\rVert \le  D \lVert \mathbf z\rVert_B^r, \quad \text{for $\mathbf z\in X_B$.}
\]
Let $K>0$ be such that $\lVert (\Id-\Gamma)^{-1}\rVert \le K$. As we noted in Lemma \ref{lem: invertibility}, we can choose $K$ independently on $\mathbf y$. 

Take now $\varepsilon>0$ such that $DK\varepsilon^r \le 1/2$ and set $L=1/4K$. Thus, for  $\delta=L\varepsilon>0$ we have that $K\delta \le \varepsilon/2$. We conclude that the assumptions of Theorem~\ref{FPT} are satisfied with $\kappa=1/2$ and $\rho=\varepsilon$ and consequently $\A$ has a unique fixed point $\mathbf z\in X_B$ such that $\lVert \mathbf z\rVert_B\le \varepsilon$. By setting $\mathbf x:=\mathbf y+\mathbf z$, we obtain the desired conclusions. 
\end{proof}

We have the following simple consequence of Theorem~\ref{cor: uniqueness}.
\begin{corollary}[Expansivity] \label{cor: expansivity}
There exists $\varepsilon>0$ so that if $\mathbf x=(x_n)_{n\in \Z}$ and $\mathbf z=(z_n)_{n\in \Z}$ are sequences satisfying \eqref{nnd} and 
\begin{displaymath}
\lVert (x_n-z_n)_{n\in \Z} \rVert_B \leq \varepsilon
\end{displaymath}
then, $\mathbf x=\mathbf z$. 
\end{corollary}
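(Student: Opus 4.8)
The plan is to reduce the statement to the uniqueness assertion in Theorem~\ref{cor: uniqueness} by treating one of the two trajectories as a pseudotrajectory for the other. Given $\mathbf x=(x_n)_{n\in \Z}$ and $\mathbf z=(z_n)_{n\in \Z}$ as in the statement, I would regard $\mathbf z$ itself as a pseudotrajectory: since $\mathbf z$ satisfies~\eqref{nnd}, the sequence $(z_{n+1}-F_n(z_n))_{n\in \Z}$ is identically zero, hence lies in $X_B$ with norm $0$, so $\mathbf z$ is a $(\delta,B)$-pseudotrajectory for every $\delta>0$. This legitimizes applying the whole machinery of Section~\ref{sec: lemmata} to the choice $\mathbf y:=\mathbf z$.

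Next I would form the map $\A \colon X_B\to X_B$ of Section~\ref{sec: lemmata} associated with this $\mathbf y=\mathbf z$, and exhibit two fixed points of $\A$ in the closed ball of radius $\varepsilon$. First, $\mathbf 0$ is a fixed point, because $(\A(\mathbf 0))_n=F_{n-1}(z_{n-1})-z_n=0$ for every $n\in \Z$. Second, setting $\mathbf w:=\mathbf x-\mathbf z\in X_B$, we have $(\A(\mathbf w))_n=F_{n-1}\bigl(z_{n-1}+(x_{n-1}-z_{n-1})\bigr)-z_n=F_{n-1}(x_{n-1})-z_n=x_n-z_n=w_n$, so $\mathbf w$ is a fixed point of $\A$ as well, and by hypothesis $\lVert \mathbf w\rVert_B=\lVert (x_n-z_n)_{n\in \Z}\rVert_B\le \varepsilon$.

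Finally I would invoke the uniqueness of the fixed point established inside the proof of Theorem~\ref{cor: uniqueness} via Theorem~\ref{FPT}: if $\varepsilon$ is chosen small enough that $DK\varepsilon^r\le 1/2$, where $K$ is the bound from Lemma~\ref{lem: invertibility}, then $\A$ has a \emph{unique} fixed point in $\{\mathbf u\in X_B:\lVert \mathbf u\rVert_B\le \varepsilon\}$. Consequently $\mathbf w=\mathbf 0$, i.e.\ $\mathbf x=\mathbf z$, which is the desired expansivity. The one point that requires care is the uniformity of the threshold $\varepsilon$: it must be selected once and for all, independently of the pair $(\mathbf x,\mathbf z)$ and in particular of the ``pseudotrajectory'' $\mathbf z$ used to build $\A$. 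This is exactly what Lemma~\ref{lem: invertibility} provides, since the constant $K$ there does not depend on $\mathbf y$, while $D$ and $r$ are fixed data of the setup; hence the same $\varepsilon$ works simultaneously for all admissible pairs. Apart from this observation, the argument is a direct unwinding of the definitions, so I do not expect any substantial obstacle.
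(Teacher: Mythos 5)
Your argument is correct and follows the same route as the paper: treat $\mathbf z$ as a pseudotrajectory (indeed an exact one) and appeal to the uniqueness of the shadowing orbit in the $\varepsilon$-ball. The only difference is cosmetic — you unwind the uniqueness assertion of Theorem~\ref{cor: uniqueness} explicitly down to the unique fixed point of $\A$ in Theorem~\ref{FPT}, whereas the paper invokes that uniqueness as a black box; your remark on the uniformity of $\varepsilon$ via Lemma~\ref{lem: invertibility} is a welcome and accurate clarification.
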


\begin{proof}
Choose $\varepsilon>0$  small enough  as in the statement of Theorem~\ref{cor: uniqueness} and corresponding $\delta >0$ as in the definition of the $B$-Lipschitz shadowing property. Obviously,  $\mathbf z$ is a $(\delta,B)$-pseudotrajectory for~\eqref{nnd} which is $(\epsilon, B)$-shadowed by itself and $\mathbf x$. Hence, the uniqueness part in Theorem~\ref{cor: uniqueness} implies that $\mathbf x=\mathbf z$ as claimed.
\end{proof}

Under certain periodicity assumption for system~\eqref{nnd}, we
can  also formulate a version of the Anosov closing lemma in our setting.

\begin{corollary}
Assume that there exists $N\in \mathbb N$ such that $F_{n+N}=F_n$ for each $n\in \mathbb N$. Then, there exists $L>0$ such that for any $\varepsilon >0$ sufficiently small and for every $(L\varepsilon, l^\infty)$-pseudotrajectory $\mathbf y=(y_n)_{n\in \Z}$ for~\eqref{nnd} that satisfies $y_n=y_{n+N}$ for $n\in \mathbb N$, there exists 
a solution $\mathbf x=(x_n)_{n\in \Z}$ of~\eqref{nnd} such that $\sup_{n\in \Z} \lVert x_n-y_n\rVert \le \varepsilon$ and $x_n=x_{n+N}$ for $n\in \mathbb N$.

\end{corollary}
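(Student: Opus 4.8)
The plan is to reduce the periodic Anosov closing lemma to Theorem~\ref{cor: uniqueness} by exploiting uniqueness of the shadowing point together with the translation invariance of the hypotheses. First I would fix $N \in \mathbb N$ with $F_{n+N}=F_n$ for all $n$ and work with $B = l^\infty$, taking $\varepsilon > 0$ small enough that the uniqueness conclusion of Theorem~\ref{cor: uniqueness} holds and letting $L = 1/4K$ be the Lipschitz constant produced there. Given an $(L\varepsilon, l^\infty)$-pseudotrajectory $\mathbf y = (y_n)_{n \in \Z}$ with $y_{n+N}=y_n$, Theorem~\ref{cor: uniqueness} yields a sequence $\mathbf x = (x_n)_{n \in \Z}$ solving~\eqref{nnd} with $\sup_{n \in \Z}\lVert x_n - y_n\rVert \le \varepsilon$, and this $\mathbf x$ is the \emph{unique} such sequence in the $\varepsilon$-ball around $\mathbf y$.

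The key step is to show $x_{n+N}=x_n$. Define the shifted sequence $\tilde x = (x_{n+N})_{n \in \Z}$. Since $F_{n+N}=F_n$, the equation $x_{n+1} = F_n(x_n)$ gives $x_{n+N+1} = F_{n+N}(x_{n+N}) = F_n(x_{n+N})$, so $\tilde x$ also solves~\eqref{nnd}. Moreover, using $y_{n+N}=y_n$ we get $\lVert x_{n+N} - y_n\rVert = \lVert x_{n+N} - y_{n+N}\rVert \le \varepsilon$, so $\tilde x$ lies in the $\varepsilon$-ball around $\mathbf y$ as well. Since $\mathbf y$ is trivially an $(L\varepsilon, l^\infty)$-pseudotrajectory shadowed by both $\mathbf x$ and $\tilde x$, the uniqueness clause of Theorem~\ref{cor: uniqueness} forces $\tilde x = \mathbf x$, i.e.\ $x_{n+N}=x_n$ for all $n \in \Z$, in particular for $n \in \mathbb N$. (Alternatively one can invoke Corollary~\ref{cor: expansivity} directly: $\mathbf x$ and $\tilde x$ both solve~\eqref{nnd} and satisfy $\sup_n \lVert x_n - x_{n+N}\rVert \le 2\varepsilon \le \varepsilon'$ after shrinking $\varepsilon$.)

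I do not expect a serious obstacle here; the only point requiring care is bookkeeping with the size constraints. One must check that a single choice of $\varepsilon$ (hence of $\delta = L\varepsilon$) simultaneously makes the uniqueness part of Theorem~\ref{cor: uniqueness} valid and keeps $\tilde x$ inside the ball where uniqueness applies — this is automatic because the bound $\lVert \tilde x - \mathbf y\rVert_{l^\infty} \le \varepsilon$ is exactly the same $\varepsilon$. The constant $L$ is the one from Theorem~\ref{cor: uniqueness}, and $K$ (hence $L$) is independent of the pseudotrajectory by Lemma~\ref{lem: invertibility}, so the statement holds uniformly. Thus the proof is essentially: apply the main theorem, shift by $N$, and invoke uniqueness.
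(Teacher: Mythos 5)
Your proof is correct and coincides with the paper's own argument: apply Theorem~\ref{cor: uniqueness} with $B=l^\infty$, shift the shadowing orbit by $N$, check that the shift is still a solution in the same $\varepsilon$-ball around $\mathbf y$, and invoke uniqueness. The only cosmetic difference is your added remark about invoking Corollary~\ref{cor: expansivity} as an alternative, which the paper does not use.
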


\begin{proof}
Applying Theorem~\ref{cor: uniqueness} (for $B=l^\infty$) we obtain the existence of a sequence $\mathbf x=(x_n)_{n\in \Z}$ solving~\eqref{nnd} such that $\sup_{n\in \Z} \lVert x_n-y_n\rVert \le \varepsilon$. Let us define a new sequence $\mathbf x'=(x_n')_{n\in \Z} \subset X$ by
\[
x_n'=x_{n+N} \quad \text{for $n\in \Z$.}
\]
It is easy to verify that $\mathbf x'$ solves~\eqref{nnd} and it obviously satisfies $\sup_{n\in \Z} \lVert x_n'-y_n\rVert \le \varepsilon$. Hence, the uniqueness in Theorem~\ref{cor: uniqueness} implies that $\mathbf x=\mathbf x'$ which immediately yields the desired conclusion.
\end{proof}

\subsection{Shadowing of linear systems}
In this subsection we deal with the system~\eqref{nnd} in the particular case when  $f_n=0$ for each $n\in \Z$, i.e. when $F_n=A_n$ for every $n\in \Z$. Hence, we deal with linear dynamics
\begin{equation}\label{ld}
x_{n+1}=A_n x_n, \quad n\in \Z. 
\end{equation}
\begin{corollary}\label{735}
The system~\eqref{ld} has an $B$-Lipschitz shadowing property. Furthermore, for each $\varepsilon >0$, the sequence $\mathbf x=(x_n)_{n\in \Z}$ satisfying~\eqref{wv} and~\eqref{ld} is unique. 
\end{corollary}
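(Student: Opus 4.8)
The plan is to deduce Corollary~\ref{735} directly from Theorem~\ref{cor: uniqueness} by specializing to the linear case $f_n \equiv 0$. First I would check that the linear system~\eqref{ld} fits the general setup of Section~\ref{sec: setup}: the constant zero maps $f_n = 0$ are differentiable, satisfy $\lVert d_x f_n \rVert = 0 \le c$, have $\lVert f_n \rVert_{\sup} = 0$ so that~\eqref{UB} holds, and trivially satisfy the H\"older condition~\eqref{der} with any $D > 0$ and $r > 0$. Hence all hypotheses are met and Theorem~\ref{cor: uniqueness} applies, giving the $B$-Lipschitz shadowing property for~\eqref{ld} immediately. Since $F_n = A_n$, the notion of $(\delta, B)$-pseudotrajectory here reads $\lVert (y_{n+1} - A_n y_n)_{n\in\Z}\rVert_B \le \delta$, which is exactly the statement we want.

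The only genuine point requiring attention is the uniqueness claim: Theorem~\ref{cor: uniqueness} asserts uniqueness of the shadowing sequence only for $\varepsilon$ \emph{sufficiently small}, whereas Corollary~\ref{735} claims it for \emph{every} $\varepsilon > 0$. The reason this improvement is available in the linear case is that the map $\A$ becomes affine: with $f_n = 0$ we have $(\A(\mathbf x))_n = A_{n-1} x_{n-1} + (A_{n-1} y_{n-1} - y_n)$, so $\A(\mathbf x) = \mathbb{A}\mathbf x + \mathbf b$ where $\mathbf b = (A_{n-1}y_{n-1} - y_n)_{n\in\Z} \in X_B$ and $\mathbb{A}$ is the linear operator from Section~\ref{sec: exp dichot}. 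The equation for a shadowing sequence $\mathbf x = \mathbf y + \mathbf z$ is the fixed-point equation $\mathbf z = \A(\mathbf z)$, i.e. $(\Id - \mathbb{A})\mathbf z = \mathbf b$. By Theorem~\ref{t1}, $\Id - \mathbb{A}$ is invertible on $X_B$ (the sequence $(A_m)$ admits an exponential dichotomy), so this equation has the unique solution $\mathbf z = (\Id - \mathbb{A})^{-1}\mathbf b$ in all of $X_B$, with no smallness restriction. In particular any two sequences satisfying~\eqref{ld} whose difference lies in $X_B$ must coincide, which covers the uniqueness assertion for arbitrary $\varepsilon$.

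So the proof I would write is short: verify the hypotheses, invoke Theorem~\ref{cor: uniqueness} for the shadowing and Lipschitz shadowing statements, and then for the uniqueness upgrade observe that $\mathbf x - \mathbf y$ solves $(\Id - \mathbb{A})\mathbf z = (A_{n-1}y_{n-1} - y_n)_{n\in\Z}$, whose solution in $X_B$ is unique by the invertibility of $\Id - \mathbb{A}$ from Theorem~\ref{t1}. I do not anticipate a real obstacle; the main thing to get right is articulating \emph{why} the smallness hypothesis on $\varepsilon$ from Theorem~\ref{cor: uniqueness} can be dropped here, namely that linearity turns the contraction-type fixed-point argument into an exact linear-algebra statement.
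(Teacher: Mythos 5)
Your proposal is correct and matches the paper's argument: the existence and Lipschitz shadowing follow immediately from Theorem~\ref{cor: uniqueness} with $f_n\equiv 0$, and for uniqueness without a smallness restriction on $\varepsilon$, the key point in both your write-up and the paper is that two solutions $\mathbf x,\mathbf x'$ of~\eqref{ld} with $\mathbf x-\mathbf y,\mathbf x'-\mathbf y\in X_B$ give $(\Id-\mathbb A)(\mathbf x-\mathbf x')=0$ with $\mathbf x-\mathbf x'\in X_B$, so invertibility of $\Id-\mathbb A$ (Theorem~\ref{t1}) forces $\mathbf x=\mathbf x'$. Your affine-map framing of the fixed-point equation is a slightly different way of saying the same thing; there is no substantive difference from the paper's proof.
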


\begin{proof}
In a view of Theorem~\ref{cor: uniqueness}, it only remains to establish the uniqueness of $\mathbf x$. Assume that $\mathbf x=(x_n)_{n\in \Z}$ and $\mathbf x'=(x_n')_{n\in \Z}$ satisfy~\eqref{ld}, $\lVert \mathbf x-\mathbf y\rVert_B \le \varepsilon$ and $\lVert \mathbf x'-\mathbf y\rVert_B \le \varepsilon$. Hence,
$\lVert \mathbf x-\mathbf x'\rVert \le 2\varepsilon$. 

On the other hand, since $\mathbf x$ and $\mathbf x'$ satisfy~\eqref{ld}, we have that \[(\Id-\mathbb A)(\mathbf x-\mathbf x')=0\] and thus Theorem~\ref{t1} implies that $\mathbf x=\mathbf x'$.
\end{proof}

\begin{corollary}\label{755}
For each sequence $\mathbf y=(y_n)_{n\in \Z} \subset X$ such that \[\lim_{\lvert n\rvert \to \infty}\lVert y_{n+1}-A_ny_n\rVert=0,\] there exists a unique sequence $\mathbf x=(x_n)_{n\in \Z}$ that solves~\eqref{ld} and with the property that
\begin{equation}\label{736}
\lim_{\lvert n\rvert\to \infty} \lVert x_n-y_n\rVert=0.
\end{equation}
\end{corollary}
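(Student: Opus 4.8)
The plan is to reduce the statement to the invertibility of $\Id-\mathbb A$ on $X_{c_0}$, which is furnished by Theorem~\ref{t1} once we note that the space $c_0$ of Example~\ref{ex2} is admissible (both defining properties are immediate: $\chi_{\{n\}}\in c_0$ with norm $1$, and $c_0$ is shift invariant with shift-invariant norm). Observe also that a sequence $(x_n)_{n\in\Z}\subset X$ lies in $X_{c_0}$ precisely when $\lim_{\lvert n\rvert\to\infty}\lVert x_n\rVert=0$, so condition~\eqref{736} is nothing but the requirement that $\mathbf x-\mathbf y\in X_{c_0}$.

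First I would rewrite the problem in terms of $\mathbf z:=\mathbf x-\mathbf y$. Set $v_n:=A_{n-1}y_{n-1}-y_n$ for $n\in\Z$; the hypothesis $\lim_{\lvert n\rvert\to\infty}\lVert y_{n+1}-A_ny_n\rVert=0$ says exactly that $\mathbf v=(v_n)_{n\in\Z}\in X_{c_0}$. A direct computation then shows that, for $\mathbf z\in X_{c_0}$, the sequence $\mathbf x:=\mathbf y+\mathbf z$ solves~\eqref{ld} if and only if $z_n-A_{n-1}z_{n-1}=v_n$ for every $n\in\Z$, that is, if and only if $(\Id-\mathbb A)\mathbf z=\mathbf v$ in $X_{c_0}$.

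For existence, since $(A_m)_{m\in\Z}$ admits an exponential dichotomy, Theorem~\ref{t1} applied with $B=c_0$ guarantees that $\Id-\mathbb A$ is invertible on $X_{c_0}$; hence $\mathbf z:=(\Id-\mathbb A)^{-1}\mathbf v\in X_{c_0}$ is well defined and $\mathbf x:=\mathbf y+\mathbf z$ has both required properties. For uniqueness, if $\mathbf x$ and $\mathbf x'$ both solve~\eqref{ld} and satisfy~\eqref{736}, then $\mathbf x-\mathbf x'\in X_{c_0}$ and $(\Id-\mathbb A)(\mathbf x-\mathbf x')=0$, so injectivity of $\Id-\mathbb A$ on $X_{c_0}$ (again Theorem~\ref{t1}) forces $\mathbf x=\mathbf x'$; alternatively one may invoke the uniqueness part of Corollary~\ref{735} with $B=c_0$.

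There is no genuine obstacle here. The one point worth a line of comment is that one cannot simply quote the $c_0$-Lipschitz shadowing property of Corollary~\ref{735}, because the error sequence $(y_{n+1}-A_ny_n)_{n\in\Z}$, although it belongs to $X_{c_0}$, need not have small $c_0$-norm, so $\mathbf y$ need not be a $(\delta,c_0)$-pseudotrajectory for the relevant $\delta$. This is precisely why the argument is routed through the exact invertibility of $\Id-\mathbb A$ rather than through the quantitative shadowing estimate.
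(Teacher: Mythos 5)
Your proof is correct and, on the existence side, takes a genuinely more direct route than the paper: instead of appealing to the $c_0$-Lipschitz shadowing of Corollary~\ref{735}, you rewrite solvability as the linear equation $(\Id-\mathbb A)\mathbf z=\mathbf v$ in $X_{c_0}$ and invoke Theorem~\ref{t1} directly. This is transparent in the linear setting, since there the map $\mathbf A$ of Subsection~\ref{sec: lemmata} is affine, $\mathbf A(\mathbf z)=\mathbb A\mathbf z+\mathbf v$, so the fixed-point equation $\mathbf A(\mathbf z)=\mathbf z$ underlying Theorem~\ref{cor: uniqueness} is literally $(\Id-\mathbb A)\mathbf z=\mathbf v$; the paper instead routes through Corollary~\ref{735}, which repackages this via the fixed-point machinery. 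On uniqueness your argument matches the paper's (which also invokes Theorem~\ref{t1}). One correction to your closing remark, though: it is not true that one cannot quote Corollary~\ref{735} -- the paper does exactly that, and legitimately so. The $B$-Lipschitz shadowing property as defined holds for \emph{every} $\varepsilon>0$, and in the linear case the smallness restriction $DK\varepsilon^r\le 1/2$ from the proof of Theorem~\ref{cor: uniqueness} is vacuous since $D=0$; hence, given $\mathbf v=(y_{n+1}-A_ny_n)_{n}\in X_{c_0}$, one simply sets $\varepsilon:=\lVert\mathbf v\rVert_{c_0}/L$, which makes $\mathbf y$ an $(L\varepsilon,c_0)$-pseudotrajectory by construction, with no need for $\lVert\mathbf v\rVert_{c_0}$ to be small. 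There is no obstruction to overcome; the two proofs differ only in how they package the invertibility of $\Id-\mathbb A$.
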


\begin{proof}
Take $B=c_0$ (see Example~\ref{ex2}).
We note that $\mathbf y$ is an $(L\varepsilon, B)$-pseudotrajectory for~\eqref{ld} for some $\varepsilon >0$ (where $L>0$ comes from the definition of $B$-Lipschitz shadowing).  By Corollary~\ref{735}, we can find a sequence $\mathbf x=(x_n)_{n\in \Z}$ that solves~\eqref{ld} and satisfies
$(x_n-y_n)_{n\in \Z} \in B$ (in fact, we also have that $\lVert (x_n-y_n)_{n\in \Z} \rVert_B=\sup_{n\in \Z} \lVert x_n-y_n\rVert \le \varepsilon$), which immediately yields~\eqref{736}.

The uniqueness of $\mathbf x$ can be established by using Theorem~\ref{t1}, as in the proof of Corollary~\ref{735}.
\end{proof}

\begin{corollary}\label{756}
Take $1\le p<\infty$. For each sequence $\mathbf y=(y_n)_{n\in \Z} \subset X$ such that \[\sum_{n\in \Z}\lVert y_{n+1}-A_ny_n\rVert^p <\infty,\] there exists a unique sequence $\mathbf x=(x_n)_{n\in \Z}$ that solves~\eqref{ld} and with the property that
\[
\sum_{n\in \Z}\lVert x_n-y_n\rVert^p <\infty. 
\]
\end{corollary}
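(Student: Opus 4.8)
The plan is to mimic the proof of Corollary~\ref{755} almost verbatim, replacing the space $c_0$ with $l^p$. First I would set $B=l^p$ (see Example~\ref{ex3}), which is an admissible Banach sequence space. The hypothesis $\sum_{n\in\Z}\lVert y_{n+1}-A_ny_n\rVert^p<\infty$ says precisely that the sequence $(y_{n+1}-A_ny_n)_{n\in\Z}$ lies in $X_B$, hence $\mathbf y$ is an $(\delta, B)$-pseudotrajectory for~\eqref{ld} with $\delta=\lVert(y_{n+1}-A_ny_n)_{n\in\Z}\rVert_B$. Choosing $\varepsilon>0$ with $\delta\le L\varepsilon$, where $L$ is the constant from the $B$-Lipschitz shadowing property supplied by Corollary~\ref{735}, makes $\mathbf y$ an $(L\varepsilon,B)$-pseudotrajectory.

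Next I would invoke Corollary~\ref{735} to obtain a sequence $\mathbf x=(x_n)_{n\in\Z}$ solving~\eqref{ld} with $(x_n-y_n)_{n\in\Z}\in X_B$, i.e. $\sum_{n\in\Z}\lVert x_n-y_n\rVert^p<\infty$, which is exactly the asserted summability. For uniqueness, suppose $\mathbf x$ and $\mathbf x'$ both solve~\eqref{ld} and both have $(x_n-y_n)_{n\in\Z}$, $(x_n'-y_n)_{n\in\Z}\in X_B$. Then $\mathbf x-\mathbf x'\in X_B$ by the triangle inequality, and since both satisfy the linear recursion we get $(\Id-\mathbb A)(\mathbf x-\mathbf x')=0$ in $X_B$; Theorem~\ref{t1} (exponential dichotomy $\iff$ invertibility of $\Id-\mathbb A$) then forces $\mathbf x=\mathbf x'$. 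This is the same argument as in Corollaries~\ref{735} and~\ref{755}.

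There is essentially no main obstacle here: the statement is a direct specialization of the general $B$-shadowing machinery to the admissible space $l^p$, and the only point requiring a line of care is noting that membership in $X_{l^p}$ is equivalent to $p$-summability of the norm sequence, together with the observation that $l^p$ satisfies the admissibility axioms (characteristic functions have positive norm, and shifts are isometries) so that Theorems~\ref{t1} and~\ref{rob}, and hence Corollary~\ref{735}, apply. Thus the proof is a two-step citation: existence from Corollary~\ref{735}, uniqueness from Theorem~\ref{t1}.
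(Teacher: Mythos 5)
Your proposal is correct and matches the paper's proof exactly: the authors likewise say the argument is obtained by repeating the proof of Corollary~\ref{755} with $B=l^p$ in place of $B=c_0$, citing Corollary~\ref{735} for existence and Theorem~\ref{t1} for uniqueness.
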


\begin{proof}
The proof can be obtain by repeating the arguments in the proof of Corollary~\ref{755}  and by using $B=l^p$ (see Example~\ref{ex3}) instead of $B=c_0$. 
\end{proof}

\begin{remark}
In a very  particular case when~\eqref{ld} is an autonomous system, i.e. $(A_n)_{n\in \Z}$ is a constant sequence of operators, Corollary~\ref{735} applied to $B=l^\infty$ together with  Corollaries~\ref{755} and~\ref{756} gives us~\cite[Theorem A.]{BCDMP}.
\end{remark}

\section{Applications}\label{A}
In this section we present some applications of our main results. 
\subsection{A nonautonomous version of the  Grobman-Hartman theorem}
Let $(A_m)_{m\in \Z}$ be  a sequence of bounded linear operators on $X$ as in Subsection~\ref{sec: setup}.  Let $c>0$ be given by Theorem \ref{rob} and fix $D,r>0$. Associated to these parameters by Theorem \ref{cor: uniqueness} (applied to $B=l^\infty$), consider $\varepsilon>0$ sufficiently small such that $D(3\varepsilon)^r <c/2$ and $\delta=L\varepsilon>0$. Moreover, suppose $\varepsilon$ is so small that $3\varepsilon$ still satisfies Theorem \ref{cor: uniqueness}. Let $(g_n)_{n\in \Z}$ be a sequence of maps $g_n\colon X\to X$ satisfying \eqref{fg}  with $c/2$ instead of $c$ and \eqref{der} and such that
\[\lVert g_n\rVert_{\sup} \le \delta \quad  \text{for each $n\in \Z$.} \] We consider a  difference equation
\begin{equation}\label{j}
y_{n+1}=G_n(y_n) \quad n\in \Z,
\end{equation}
where $G_n:=A_n+g_n$. By decreasing $\delta$ (if necessary), we have that $G_n$ is a homeomorphism for each $n\in \Z$ (see~\cite{BV1}). We define
\[
\mathcal{G}(m,n)=
\begin{cases}
G_{m-1}\circ \ldots \circ G_n & \text{if $m>n$,}\\
\Id & \text{if $m=n$,}\\
G_m^{-1}\circ \ldots \circ G_{n-1}^{-1} & \text{if $m<n$.}
\end{cases}
\]
\begin{theorem}\label{NGH}
There exists a unique sequence $h_m \colon X \to X$, $m\in \Z$ of homeomorphisms such that  for each $m\in \Z$,
\begin{equation}\label{644}
h_{m+1}\circ G_m=A_m \circ h_m
\end{equation}
and
\begin{equation}\label{702}
\lVert h_m-\Id\rVert_{\sup}=\sup_{x\in X} \lVert h_m(x)-x\rVert \le \epsilon. 
\end{equation}
\end{theorem}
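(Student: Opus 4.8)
The plan is to construct each $h_m$ as a perturbation of the identity, writing $h_m = \Id + \phi_m$ where $\boldsymbol\phi = (\phi_m)_{m\in\Z}$ is a sequence of bounded continuous maps $\phi_m\colon X\to X$ with $\sup_m \lVert \phi_m\rVert_{\sup} \le \epsilon$. Substituting this ansatz into the conjugacy equation~\eqref{644} and using $G_m = A_m + g_m$, the equation $h_{m+1}\circ G_m = A_m\circ h_m$ becomes
\[
\phi_{m+1}(G_m(x)) = A_m \phi_m(x) - g_m(x) \quad \text{for all } x\in X,\ m\in\Z.
\]
The key observation is that, since each $G_n$ is a homeomorphism, we may reparametrize: for fixed $x\in X$, consider the orbit $y_n := \mathcal G(n,0)(x)$, so that $y_{n+1} = G_n(y_n)$. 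Then the displayed equation says precisely that the sequence $\mathbf z = (\phi_n(y_n))_{n\in\Z}$ must satisfy $z_{n+1} = A_n z_n - g_n(y_n)$, i.e. $((\Id - \mathbb A)\mathbf z)_n = -g_{n-1}(y_{n-1})$ in the space $X_B$ with $B = l^\infty$. Because $\lVert g_n\rVert_{\sup}\le\delta$ for all $n$, the sequence $(-g_{n-1}(y_{n-1}))_{n\in\Z}$ lies in $X_{l^\infty}$ with norm at most $\delta$, so by Theorem~\ref{t1} (invertibility of $\Id-\mathbb A$, with bound $K$ from Theorem~\ref{rob} since $A_m$ itself admits the dichotomy) there is a unique $\mathbf z\in X_{l^\infty}$ solving it, with $\lVert \mathbf z\rVert_B \le K\delta$. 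This forces the value $\phi_0(x) = z_0$; shifting the base point, the same construction pins down $\phi_m(x)$ for every $m$ and every $x$. Choosing $\delta$ (hence $\varepsilon = \delta/L$) small enough that $K\delta \le \epsilon$ gives~\eqref{702}, and uniqueness of $\mathbf z$ gives uniqueness of the sequence $(h_m)$ among those satisfying~\eqref{644} and~\eqref{702}.

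It then remains to check that the $\phi_m$ so defined are continuous and that each $h_m = \Id + \phi_m$ is a homeomorphism. For continuity, one notes that $\phi_m(x)$ is obtained as a fixed coordinate of $(\Id-\mathbb A)^{-1}$ applied to the sequence $n\mapsto -g_{n-1}(\mathcal G(n-1,m)(x))$; since each $g_n$ and each $G_n^{\pm1}$ is continuous and the $l^\infty$-dependence on the input sequence is Lipschitz via the bounded inverse, $\phi_m$ depends continuously on $x$ (uniform boundedness of the tails makes the dependence genuinely continuous in the sup-norm). For the homeomorphism property I would run the symmetric construction: applying Theorem~\ref{cor: uniqueness} / the same dichotomy argument with the roles of $G_m$ and $A_m$ interchanged — this is legitimate because $A_m$ admits an exponential dichotomy and $G_m = A_m + g_m$ is a perturbation with $\sup_m\lVert d_x g_m\rVert \le c/2$ and $\sup_m\lVert g_m\rVert_{\sup}<\infty$, so $(G_m)$ also satisfies the standing hypotheses of Subsection~\ref{sec: setup} — to produce a sequence $k_m = \Id + \psi_m$ of maps with $\lVert\psi_m\rVert_{\sup}\le\epsilon$ satisfying $k_{m+1}\circ A_m = G_m\circ k_m$. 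Composing, $h_{m+1}\circ G_m\circ k_m = A_m\circ h_m\circ k_m$ and $k_{m+1}\circ A_m = G_m \circ k_m$ show that $h_m\circ k_m$ conjugates $A_m$ to itself, i.e. $(h_m\circ k_m)_{m}$ satisfies~\eqref{644} with $g_m\equiv 0$; moreover $\lVert h_m\circ k_m - \Id\rVert_{\sup}\le 2\epsilon \le 3\epsilon$, so by the uniqueness part (here is where we need $3\varepsilon$ to still satisfy Theorem~\ref{cor: uniqueness}) we get $h_m\circ k_m = \Id$, and symmetrically $k_m\circ h_m = \Id$. Hence $h_m$ is a homeomorphism with inverse $k_m$.

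The main obstacle I anticipate is the bookkeeping around \emph{uniformity and continuity}: one must verify that the map $x\mapsto \phi_m(x)$ is continuous (not merely that each coordinate problem is solvable), which requires arguing that the input sequences $(-g_{n-1}(\mathcal G(n-1,m)(x)))_n$ vary continuously in the $X_{l^\infty}$-norm as $x$ varies — a point where the uniform bound $\lVert g_n\rVert_{\sup}\le\delta$ together with continuity of the bounded inverse does the job, but which needs care because pointwise continuity of each $g_n$ does not automatically give sup-norm continuity of the whole sequence. A clean way around this is to observe that $\phi_m$ is itself the unique bounded solution of a fixed-point equation of the form $\phi_m = \mathcal T_m(\boldsymbol\phi)$ that can be solved by the contraction argument of Theorem~\ref{FPT} directly in the Banach space of bounded (not necessarily continuous) sequences of maps, and then note that the subspace of sequences of \emph{continuous} maps is closed and invariant under $\mathcal T_m$, so the fixed point lands there. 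The second delicate point is simply making sure the constants line up — that the $\varepsilon$, $\delta = L\varepsilon$, $c/2$, and $D(3\varepsilon)^r < c/2$ chosen before the statement are exactly what the two applications of Theorem~\ref{cor: uniqueness} (forward and backward conjugacies) and the final uniqueness step consume — but this is already arranged in the hypotheses preceding Theorem~\ref{NGH}.
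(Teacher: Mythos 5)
Your construction of $h_m$ coincides with the paper's: for an orbit $(y_n)_n$ of \eqref{j} one solves the linear shadowing problem via $(\Id-\mathbb A)^{-1}$, i.e.\ Corollary~\ref{735}, and sets $h_m(y_m):=x_m$ (your reparametrization $z_n=\phi_n(y_n)$, $z_{n+1}=A_nz_n-g_n(y_n)$ is the same computation). Your uniqueness argument is also the paper's. Where you diverge is the homeomorphism property: the paper proves injectivity from the expansivity Corollary~\ref{cor: expansivity} and surjectivity by shadowing orbits of $A_n$ by orbits of $G_n$, whereas you directly build the candidate inverse $k_m$ by that same reverse shadowing, then show $h_m\circ k_m=\Id$ and $k_m\circ h_m=\Id$ by noting that both compositions semiconjugate a system to itself within $2\varepsilon$ and invoking uniqueness (this is exactly the classical Anosov argument in \cite{KH}, and it is a correct and clean alternative; it does quietly require $\varepsilon$ small enough that $2\varepsilon$ is within the uniqueness threshold, which your hypotheses guarantee).

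The one place where your proposal has a genuine gap is continuity, and you correctly flag it. Your primary reasoning — that $\phi_m$ is continuous because $(\Id-\mathbb A)^{-1}$ is a bounded operator and each coordinate of the input sequence is continuous in $x$ — does not go through, because $x\mapsto\bigl(-g_{n-1}(\mathcal G(n-1,m)x)\bigr)_n$ need not be continuous as a map into $X_{l^\infty}$: the $\mathcal G(n,m)$ are not uniformly equicontinuous in $n$. Your proposed repair (solve the conjugacy equation via Theorem~\ref{FPT} in the Banach space of bounded sequences of maps and observe that the subspace of sequences of \emph{continuous} maps is closed and invariant) also stops short: the equation is $(\Id-\mathcal L)\boldsymbol\phi=\mathbf b$ with $(\mathcal L\boldsymbol\phi)_m=A_{m-1}(\phi_{m-1}\circ G_{m-1}^{-1})$, the operator $\mathcal L$ is not a contraction, and invoking Theorem~\ref{FPT} requires $\Id-\mathcal L$ to be invertible \emph{on the space of continuous maps}, which Theorem~\ref{t1} does not give (it is a statement about $X_B$, not about a function space). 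To close this one would have to exhibit $(\Id-\mathcal L)^{-1}$ by the dichotomy Green's-function series and check that each partial sum is continuous and the series converges uniformly; your parenthetical ``uniform boundedness of the tails'' gestures at exactly this exponential decay, but it is the crux and needs to be carried out. The paper sidesteps the entire issue with Lemma~\ref{lem: unif expansive}, a finite-window expansivity estimate that turns pointwise continuity of $\mathcal G(n,0)$ for $|n|\leq N$ directly into continuity of $h_0$; that lemma (and its nonlinear analogue, needed for continuity of $k_m=h_m^{-1}$) is the ingredient your proposal is missing, and it is also what makes the paper's proof considerably shorter than the Green's-function route.
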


\begin{proof}
Fix $m\in \Z$ and $y\in X$ and define a sequence $\mathbf y=(y_n)_{n\in \Z}$ by $y_n=\mathcal{G}(n, m)y$ for $n\in \Z$. Note that $\mathbf y$ is a solution of~\eqref{j}. Then,
\[
\sup_{n\in \Z}\lVert y_{n+1}-A_n y_n\rVert=\sup_{n\in \Z} \lVert g_n(y_n)\rVert \le \delta.
\]
Hence, it follows from Corollary~\ref{735} (applied to the case when $B=l^\infty$) that there exists a unique sequence $\mathbf x=(x_n)_{n\in \Z}$ such that
$x_{n+1}=A_nx_n$ for $n\in \Z$ and $\sup_{n\in \Z}\lVert x_n-y_n \rVert \le \varepsilon$. Set
\[
 h_m(y)=h_m(y_m):=x_m. 
\]
It is easy to verify that~\eqref{644} holds.  Furthermore, 
\[
\lVert h_m(y)-y\rVert=\lVert x_m-y_m\rVert \le \epsilon, 
\]
which yields~\eqref{702}. We will now prove that each $h_m$ is a homeomorphism.  Let us start with the following simple auxiliary result. 
\begin{lemma}\label{lem: unif expansive}
Let $\mathbf x=(x_n)_{n\in \Z}$ and $ \tilde{\mathbf x }=(\tilde{x}_n)_{n\in \Z}$ be two  sequences such that \[x_{n+1}=A_nx_n\quad  \text{and} \quad \tilde{x}_{n+1}=A_n\tilde{x}_n, \]for every $n\in \Z$. Then, for every $\rho >0$ there exists $N\in \mathbb N$ such that if $\lVert x_n -\tilde{x}_n\rVert \le 3\varepsilon$ for every $| n| \leq N$, then we have that $\lVert x_0-\tilde{x}_0\rVert \le \rho$. In addition, analogous property holds for solutions of~\eqref{j}.
\end{lemma}

\begin{proof}[Proof of the lemma]
It follows from~\eqref{P} and~\eqref{ED1} that 
\[
\lVert P_0(x_0-\tilde x_0)\rVert =\lVert \cA(0, -N)P_{-N}(x_{-N}-\tilde x_{-N})\rVert \le Ce^{-\lambda N}\lVert x_{-N} -\tilde x_{-N}\rVert.
\]
Similarly, it follows from~\eqref{P} and~\eqref{ED2} that 
\[
\lVert (\Id-P_0)(x_0-\tilde x_0)\rVert=\lVert \cA(0, N)(\Id-P_N)(x_N-\tilde x_N)\rVert \le Ce^{-\lambda N} \lVert x_N-\tilde x_N\rVert. 
\]
Hence,
\[
\lVert x_0-\tilde x_0\rVert \le Ce^{-\lambda N} (\lVert x_{-N} -\tilde x_{-N}\rVert+ \lVert x_N-\tilde x_N\rVert),
\]
and therefore choosing $N$ such that $6Ce^{-\lambda N}\varepsilon<\rho$ we obtain desired conclusion.  As for the nonlinear case, let $\mathbf y=(y_n)_{n\in \Z}$ and $\tilde{\mathbf y }=(\tilde{y}_n)_{n\in \Z}$ be sequences satisfying \eqref{j} and suppose $\lVert y_n -\tilde{y}_n\rVert \le 3\varepsilon$ for every $| n| \leq N$. Setting $z_n=y_n -\tilde{y}_n$ we get that 
\begin{displaymath}
z_{n+1}=G_n(y_n)-G_n(\tilde{y}_n)=d_{\tilde{y}_n}G_nz_n + G_n(y_n)-G_n(\tilde{y}_n) - d_{\tilde{y}_n}G_nz_n.
\end{displaymath}
Thus, 
\begin{displaymath}
z_{n+1}=(L_n+T_n)z_n
\end{displaymath}
where $L_n:=d_{\tilde{y}_n}G_n =A_n+d_{\tilde{y}_n}g_n $ and
\begin{displaymath}
T_n:=\int_0^1 \left(d_{ty_n+(1-t)\tilde{y}_n}G_n -d_{\tilde{y}_n}G_n\right) dt.
\end{displaymath}
Now, using \eqref{der} we get that $\|T_n\|\leq D\|y_n -\tilde{y}_n\|^r\leq D(3\varepsilon)^r<c/2$ for every $| n| \leq N$. Therefore, it follows from Theorem \ref{rob} that the sequence $(B_n)_{n\in \Z}$ given by $B_n=L_n+T_n$ for $| n| \leq N$ and $B_n=L_n$ for $|n|>N$ admits an exponential dichotomy with constants $C$ and $\lambda$ as in Subsection \ref{sec: exp dichot} depending only on $(A_n)_{n\in \Z}$ and $c$ (see Remark~\ref{4:01}).  Thus, choosing $N$ such that $6Ce^{-\lambda N}\varepsilon<\rho$ and proceeding as in the linear case we get the desired result. 
\end{proof}

Let us now establish continuity of $h_0$ (the same argument applies for every $h_m$). 
For $\rho>0$, take  $N\in \mathbb N$  given by Lemma~\ref{lem: unif expansive}. By continuity of maps $g_n$, there exists $\eta >0$ such that for every $y,z\in X$ which satisfy $\|y-z\|<\eta$, we have that
\begin{displaymath}
\| \mathcal{G}(n,0)y -\mathcal{G}(n,0)z  \| <\varepsilon,
\end{displaymath}
for every  $| n| \leq N$. 

Take $y,z\in X$ satisfying $\|y-z\|<\eta$ and consider $y_n=\mathcal{G}(n,0)y$, $z_n=\mathcal{G}(n,0)z$,  $x_n=\mathcal{A}(n,0) h_0(y)$ and $\tilde{x}_n=\mathcal{A}(n,0)h_0(z)$ for  $n\in \Z$. For every $n\in \Z$ satisfying $|n|\leq N$, we have that 
\begin{displaymath}
\begin{split}
\| x_n-\tilde{x}_n  \|&=\|  \mathcal{A}(n,0)h_0(y)-\mathcal{A}(n,0)h_0(z) \| \\
&\leq \| h_{n}( \mathcal{G}(n,0)y) - h_{n}( \mathcal{G}(n,0)z)  \| \\
& \leq \| h_{n}( \mathcal{G}(n,0)y) -\mathcal{G}(n,0)y\| + \|\mathcal{G}(n,0)y-\mathcal{G}(n,0)z\|\\ 
&+\|\mathcal{G}(n,0)z-  h_{n}(\mathcal{G}(n,0)z)  \| \\
& =\|h_{n}(y_{n})-y_{n}\| +\|\mathcal{G}(n,0)y-\mathcal{G}(n,0)z\| \\
&+\| z_{n}-h_{n}(z_n)\| \\
&\le \varepsilon + \varepsilon +\varepsilon =3\varepsilon.
\end{split}
\end{displaymath}
Thus, since $(x_n)_{n\in \Z}$ and $(\tilde{x}_n)_{n\in \Z}$ are solutions of $x_{n+1}=A_nx_n$, $n\in \Z$, it follows from Lemma \ref{lem: unif expansive} that $\|h_0(y)-h_0(z)\|\le \rho$ proving that $h_0$ is continuous.

We now prove that $h_m$ is injective for each $m\in \Z$. Suppose that  there exist $y,z\in X$ such that $h_m(y)=h_m(z)$. We define sequences $(y_n)_{n\in \Z}$, $(z_n)_{n\in \Z}$ and $(x_n)_{n\in \Z}$ by $y_n=\mathcal{G}(n,m)y$, $ z_n= \mathcal{G}(n,m)y$ and $x_n=\mathcal{A}(n,m) h_m(y)=\mathcal{A}(n,m)h_m(z)$, $n\in \Z$. Then, by the definition of $h_m$ we have that
\begin{displaymath}
\sup_{n\in \Z}\lVert x_n-y_n \rVert \le \varepsilon \text{ and } \sup_{n\in \Z}\lVert x_n-z_n \rVert \le \varepsilon.
\end{displaymath}
In particular,
\begin{displaymath}
\sup_{n\in \Z}\lVert y_n-z_n \rVert \le 2\varepsilon.
\end{displaymath}
Then,  Corollary \ref{cor: expansivity} (applied for $B=l^\infty$)  implies that $y_n=z_n$ for every $n\in \Z$. Consequently,  $y=y_m=z_m=z$ and thus $h_m$ is injective. 

Let us now establish surjectivity of $h_m$. Take $x\in X$ and consider a sequence $x_n=\mathcal{A}(n,m)x$. Then,
\begin{displaymath}
\sup_{n\in \Z} \| x_{n+1}-G_nx_n\|=\sup_{n\in \Z}\| A_nx_n- G_nx_n \|=\sup_{n\in \Z}\|g_n(x_n)\|\le \delta.
\end{displaymath}
Hence, $(x_n)_{n\in \Z}$ is a $(\delta, l^\infty)$-pseudotrajectory for \eqref{j}. In particular, by Theorem \ref{cor: uniqueness} applied to $(G_n)_{n\in \Z}$ there exists a unique sequence $(y_n)_{n\in \Z}$ satisfying \eqref{j} and such that $\sup_{n\in \Z}\lVert x_n-y_n \rVert \le \varepsilon$. Therefore, $h_m(y_m)=x_m=x$ proving that $h_m$ is surjective.

The proof of continuity of $h_m^{-1}$ is completely analogous to the proof of continuity of $h_m$ and therefore we omit it. 

Finally, it remains to establish uniqueness of $h_m$. Indeed,  suppose that $(\tilde{h}_m)_{m\in \Z}$ is  a sequence satisfying~\eqref{644} and~\eqref{702}. Take $y\in X$, $m\in \Z$  and consider  a sequence $y_n=\mathcal{G}(n,m)y$, $n\in \Z$. Then, 
\begin{displaymath}
h_{n+1}(y_{n+1})=A_n h_n(y_n)  \text{ and } \tilde{h}_{n+1}(y_{n+1})=A_n \tilde{h}_n (y_n).
\end{displaymath}
Moreover, $\|h_n(y_n)-y_n\|\le \varepsilon$ and $\|\tilde{h}_n(y_n)-y_n\|\le \varepsilon$ for every $n\in \Z$. Thus, by the uniqueness in Theorem \ref{cor: uniqueness} it follows that $h_n(y_n)=\tilde{h}_n(y_n)$ for every $n\in \Z$. Consequently, $h_m(y)=\tilde h_m(y)$.  Since $y$ was arbitrary we conclude that $h_m=\tilde h_m$ and the proof of the theorem is completed. 
\end{proof}

\begin{remark}\label{10:53}
Theorem~\ref{NGH} can be described  as a nonautonomous version of the classical Grobman-Hartman theorem~\cite{Hart60}. The first result of this type has been established by Palmer~\cite{Palmer}  for finite-dimensional dynamics with continuous time. Subsequent generalizations for infinite-dimensional  dynamics  that admits a nonuniform exponential dichotomy are due to Barreira
and Valls~\cite{BV1, BarVal-DCDS07} (see also~\cite{BDV1} for simple proof). 

When compared with main results in~\cite{BV1,  BarVal-DCDS07, BDV1}, our Theorem~\ref{NGH} works under stronger assumptions that maps $g_n$ are differentiable and that~\eqref{der} holds. 

However, our goal was not to refine results from those papers (which in fact seem to be rather optimal)  but rather to offer a new approach to the problem of linearization of a nonautonomous dynamics based on the shadowing theory we developed in previous section. 
\end{remark}

\subsection{Preservation of positive Lyapunov exponents}
Using our main results, we can also formulate conditions under which positive Lyapunov exponents associated with a linear dynamics remain unchanged under small nonlinear perturbations.  For more general results related to the preservation of Lyapunov exponents under perturbations we refer to~\cite[Section 7.]{BDV2} and references therein. 
We continue to use the same notation as in the previous subsection.  
\begin{corollary}
Assume that $(y_n)_{n\in \Z}$ is a solution of~\eqref{j} such that
\[
\lambda:=\limsup_{n\to \infty} \frac 1 n \log \lVert y_n\rVert>0. 
\]
Then, there exists a solution $(x_n)_{n\in \Z}$ of~\eqref{ld} such that
\begin{equation}\label{800}
\lambda=\limsup_{n\to \infty} \frac 1 n \log \lVert x_n\rVert.
\end{equation}
\end{corollary}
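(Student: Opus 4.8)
The plan is to transport a solution of the perturbed equation~\eqref{j} to a solution of the linear equation~\eqref{ld} via the conjugacy furnished by Theorem~\ref{NGH}, and then to observe that this conjugacy displaces points by a uniformly bounded amount, which is not enough to alter an exponential growth rate that is already strictly positive.

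First I would invoke Theorem~\ref{NGH} to obtain the sequence $(h_m)_{m\in\Z}$ of homeomorphisms satisfying~\eqref{644} and~\eqref{702}. Given the solution $(y_n)_{n\in\Z}$ of~\eqref{j}, I set $x_n:=h_n(y_n)$ for $n\in\Z$. Then~\eqref{644} gives
\[
x_{n+1}=h_{n+1}(G_n(y_n))=A_n h_n(y_n)=A_n x_n,
\]
so $(x_n)_{n\in\Z}$ solves~\eqref{ld}. (One could also bypass the conjugacy: since $\sup_{n\in\Z}\lVert y_{n+1}-A_n y_n\rVert=\sup_{n\in\Z}\lVert g_n(y_n)\rVert\le\delta$, the sequence $(y_n)_{n\in\Z}$ is a $(\delta,l^\infty)$-pseudotrajectory for~\eqref{ld}, and Corollary~\ref{735} produces such an $(x_n)_{n\in\Z}$ directly.)

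Next I would use~\eqref{702}, which yields $\lVert x_n-y_n\rVert\le\epsilon$ for every $n\in\Z$, so that $\lVert x_n\rVert$ and $\lVert y_n\rVert$ differ by at most $\epsilon$. It then remains to prove the elementary fact that if nonnegative sequences $(u_n)$ and $(v_n)$ satisfy $u_n\le v_n+\epsilon$ and $v_n\le u_n+\epsilon$ for all $n$ and $\limsup_{n\to\infty}\frac1n\log v_n=\lambda>0$, then $\limsup_{n\to\infty}\frac1n\log u_n=\lambda$. For the upper estimate, $u_n\le v_n+\epsilon\le 2\max(v_n,\epsilon)$ gives $\frac1n\log u_n\le\frac{\log 2}{n}+\max\!\big(\tfrac1n\log v_n,\tfrac1n\log\epsilon\big)$, and letting $n\to\infty$ yields $\limsup_{n\to\infty}\frac1n\log u_n\le\max(\lambda,0)=\lambda$. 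The symmetric computation gives $\lambda\le\max\big(\limsup_{n\to\infty}\frac1n\log u_n,0\big)$, and since $\lambda>0$ this forces $\limsup_{n\to\infty}\frac1n\log u_n\ge\lambda$. Applying this with $u_n=\lVert x_n\rVert$ and $v_n=\lVert y_n\rVert$ gives~\eqref{800}.

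The only genuinely delicate point is this last step, and there the positivity of $\lambda$ is indispensable: a bounded additive perturbation can destroy a nonpositive exponent (for instance, it can turn a sequence converging to $0$ into one bounded away from $0$), so the hypothesis $\lambda>0$ cannot be dropped. Everything else is bookkeeping with $h$ (or with Corollary~\ref{735}); the minor point that $\lVert x_n\rVert$ or $\lVert y_n\rVert$ may vanish for some $n$ is harmless, since such indices contribute $-\infty$ and do not affect the $\limsup$.
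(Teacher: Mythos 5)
Your proof is correct and, up to presentation, is the same as the paper's: the paper goes directly via Corollary~\ref{735}, noting that a solution of~\eqref{j} is automatically a $(\delta,l^\infty)$-pseudotrajectory for~\eqref{ld}, which is precisely the route you mention in your parenthetical remark. Your primary framing through the conjugacy $h_n$ of Theorem~\ref{NGH} is equivalent, since $h_n(y_n)$ is by construction the shadowing orbit produced by Corollary~\ref{735}, but it is a slight detour: one does not need the homeomorphism property (injectivity, surjectivity, continuity), only the existence of a nearby linear orbit. The one genuine addition in your write-up is that you spell out the elementary estimate (via $u_n\le 2\max(v_n,\epsilon)$ and $\limsup\max=\max\limsup$) showing that a uniformly bounded additive perturbation cannot change a strictly positive upper Lyapunov exponent, a step the paper dismisses with ``it now follows readily''; your observation that positivity of $\lambda$ is essential here, and that vanishing terms contribute $-\infty$ harmlessly, is the right thing to record.
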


\begin{proof}
Observe that 
\[
\sup_{n\in \Z}\lVert y_{n+1}-A_n y_n\rVert=\sup_{n\in \Z} \lVert g_n(y_n)\rVert \le \delta.
\]
Hence, it follows from Corollary~\ref{735} (applied to the case when $B=l^\infty$) that there exists a unique sequence $\mathbf x=(x_n)_{n\in \Z}$ such that
$x_{n+1}=A_nx_n$ for $n\in \Z$ and $\sup_{n\in \Z}\lVert x_n-y_n \rVert \le \varepsilon$. It now follows readily that~\eqref{800} holds. 
\end{proof}

\medskip{\bf Acknowledgements.} L. B. was partially supported by a CAPES-Brazil postdoctoral fellowship under Grant No. 88881.120218/2016-01 at the University of Chicago. D. D. was supported by the Croatian Science Foundation under the project IP-2014-09-2285.


\end{document}